\def\bsg{{\boldsymbol{g}}}
\def\bsv{{\boldsymbol{v}}}
\def\bsx{{\boldsymbol{x}}}
\def\bsH{{\boldsymbol{H}}}
\def\bsK{{\boldsymbol{K}}}
\def\bsL{{\boldsymbol{L}}}
\def\bsM{{\boldsymbol{M}}}
\def\bsQ{{\boldsymbol{Q}}}
\newtheorem{theorem}{Theorem}
\newtheorem{assumption}{Assumption}
\newtheorem{lemma}{Lemma}
\newtheorem{definition}{Definition}
\newtheorem{remark}{Remark}
\def\sgn{\mathop{\rm sgn}}
\newcommand{\diag}{{\rm diag}}
\DeclareMathOperator{\col}{col}
\DeclareMathOperator{\Deg}{Deg}
\newcolumntype{M}[1]{>{\centering\arraybackslash}m{#1}}
\newcolumntype{N}{@{}m{0pt}@{}}
\definecolor{gray}{RGB}{128,128,128}
\newcommand{\red}[1]{{\color{red} #1}}
\newenvironment{proof}[1][Proof]%
  {\smallskip\par\noindent\textbf{#1\,:\ }}%
  {\hspace*{\fill} \rule{6pt}{6pt}\smallskip}
\newenvironment{proof*}[1][Proof]%
  {\smallskip\par\noindent\textbf{#1\,:\ }}%
\def\BibTeX{{\rm B\kern-.05em{\sc i\kern-.025em b}\kern-.08em
    T\kern-.1667em\lower.7ex\hbox{E}\kern-.125emX}}
\begin{document}

\title{Accelerated Primal-Dual Algorithm for Distributed Nonconvex Optimization
}

\author{\IEEEauthorblockN{Shengjun~Zhang}
\IEEEauthorblockA{\textit{OSCAR, Department of Electrical Engineering} \\
\textit{University of North Texas}\\
Denton, TX, USA\\
ShengjunZhang@my.unt.edu}
\and
\IEEEauthorblockN{Colleen P. Bailey}
\IEEEauthorblockA{\textit{OSCAR, Department of Electrical Engineering} \\
\textit{University of North Texas}\\
Denton, TX, USA \\
Colleen.Bailey@unt.edu}
}

\maketitle

\begin{abstract}
This paper investigates accelerating the convergence of distributed optimization algorithms on non-convex problems.
We propose a distributed primal-dual stochastic gradient descent~(SGD) equipped with ``powerball'' method to accelerate.
We show that the proposed algorithm achieves the linear speedup convergence rate $\mathcal{O}(1/\sqrt{nT})$ for general smooth (possibly non-convex) cost functions.
We demonstrate the efficiency of the algorithm through numerical experiments by training two-layer fully connected neural networks and convolutional neural networks on the MNIST dataset to compare with state-of-the-art distributed SGD algorithms and centralized SGD algorithms.
\end{abstract}

\begin{IEEEkeywords}
distributed optimization, primal-dual technique, accelerated algorithms, stochastic gradient descent
\end{IEEEkeywords}

\section{Introduction}\label{fo_pb:intro}

Centralized stochastic gradient descent~(SGD) is one of the most popular and powerful optimizers.
With the rise of distributed optimization problems, centralized SGD has been extended to distributed settings.
A general distributed optimization problem is to consider a network of $n$ agents cooperatively solving a global problem, which can be formulated mathematically as~\eqref{fo_pb:eqn:xopt}.
\begin{align}\label{fo_pb:eqn:xopt}
 \min_{x\in \mathbb{R}^p} f(x)=\frac{1}{n}\sum_{i=1}^nf_i(x),
\end{align}
where $x\in \mathbb{R}^p$ is the optimization variables, $f_i: \mathbb{R}^{p}\rightarrow \mathbb{R}$ is the local smooth (possibly non-convex) cost function of agent $i$.
Each agent can only exchange information~(e.g. local gradient information) with its own neighbors through a given undirected graph $\mathcal G$.

Many distributed machine learning problems can be formulated into the optimization problem in the form of~\eqref{fo_pb:eqn:xopt} through data parallelism, such as deep learning \cite{dean2012large} and federated learning \cite{McMahan2017communication}.
In this paper, we consider the general case without focusing on a particular application and propose an accelerated primal-dual distributed algorithm to solve~\eqref{fo_pb:eqn:xopt}.

\subsection{Literature Review}
Many distributed algorithms based on SGD have been proposed in recent years.
Various parallel SGD algorithms aim to solve~\eqref{fo_pb:eqn:xopt} when the communication network is a star graph;
parallel asynchronous updating SGD algorithms \cite{recht2011hogwild,de2015taming,lian2015asynchronous,lian2016Comprehensive,Zhou2018distributedas},
parallel SGD algorithms with compression\cite{de2015taming,pmlr-v80-bernstein18a,
jiang2018linear,reisizadeh2019fedpaq,basu2019qsparse},
parallel SGD algorithms with periodic averaging approaches \cite{jiang2018linear,reisizadeh2019fedpaq,wang2018adaptive,yu2019parallel,
haddadpour2019trading,Yu2019on,haddadpour2019local}, and parallel SGD algorithm with adaptive batch sizes \cite{Yu2019Computation}.
In \cite{jiang2018linear,yu2019parallel,Yu2019on,Yu2019Computation}, the authors showed a linear speedup convergence rate of $\mathcal{O}(1/\sqrt{nT})$ for general non-convex cost functions, where $T$ is the total number of iterations.
However, the aforementioned parallel SGD algorithms require additional assumptions such as bounded gradients of global cost functions.

Compared to parallel SGD algorithms, distributed SGD algorithms naturally overcome communication bottlenecks.
In this category, a great number of algorithms have been proposed in order to solve~\eqref{fo_pb:eqn:xopt} efficiently and accurately;
\cite{Yu2019on,jiang2017collaborative,lian2017can,george2019distributed} proposed synchronous distributed SGD algorithms,
\cite{pmlr-v80-lian18a,Assran2019Stochastic} considered asynchronous distributed SGD algorithms, 
\cite{tang2018communication,reisizadeh2019robust,taheri2020quantized,Singh2020Communication}
utilized compression techniques on distributed SGD algorithms, and\cite{wang2018cooperative} applied the periodic averaging method on distributed SGD algorithm.
The convergence rate has been established as $\mathcal{O}(1/\sqrt{nT})$ for general non-convex cost functions \cite{Yu2019on,lian2017can,Assran2019Stochastic,tang2018communication,taheri2020quantized,
Singh2020Communication,wang2018cooperative} with additional assumptions on the cost functions.
Based on \cite{lian2017can}, the authors of \cite{Tang2018Decentralized} proposed named $\mathrm{D}^2$ with convergence rate $\mathcal{O}(1/\sqrt{nT})$ without additional assumptions on the cost functions but under a restrictive assumption on the communication graph. 
Several distributed stochastic gradient tracking algorithms were proposed in \cite{lu2019gnsd,zhang2019decentralized} for arbitrarily connected communication networks with $\mathcal{O}(1/\sqrt{T})$ convergence rate.

There are only a few accelerated first-order distributed algorithms in the literature.
\cite{xu2020accelerated} proposed an accelerated primal-dual algorithms for distributed smooth convex optimization utilizing Nesterov acceleration.
\cite{li2020decentralized} considered solving distributed convex optimization problems with increasing penalty parameters to accelerate.
The authors of \cite{qu2019accelerated} considered Nesterov acceleration on different classes of convex cost functions.
\cite{uribe2020dual} proposed an accelerated dual algorithm for general convex cost functions.
These algorithms utilize full gradient information. In the era of big data, full gradient information is often too difficult to compute.
In \cite{fallah2019robust, Yu2019on}, the authors consider accelerating distributed SGD algorithm with momentum terms.
DM-SGD in~\cite{Yu2019on} has demonstrated a convergence rate of $\mathcal{O}(1/\sqrt{nT})$ for general non-convex cost functions but with similar local cost functions.
D-ASG in~\cite{fallah2019robust} achieved a convergence rate of $\mathcal{O}(1/\sqrt{nT})$ for convex cost functions.
\subsection{Contributions}
The contributions of this work are summarized in the following:
\subsubsection{}
We propose an accelerated distributed primal-dual SGD algorithm to solve the optimization problem~\eqref{fo_pb:eqn:xopt} for arbitrarily connected communication networks.
Even though each agent computes its own primal and dual variable in each iteration, only the primal variable is shared with its neighbors.
\subsubsection{}
To our best knowledge, the proposed algorithm is the first accelerated distributed SGD for general non-convex cost functions, which is different from \cite{fallah2019robust}.
As opposed to \cite{Yu2019on}, there is no additional assumptions on cost functions.
\subsubsection{}
Theoretically, we show that the proposed algorithm achieves a linear speedup convergence rate $\mathcal{O}(1/\sqrt{nT})$.
Moreover, we compare the proposed algorithm with several state-of-the-art algorithms in distributed machine learning and deep learning tasks to illustrate the advantages of utilizing the proposed algorithm.

\subsection{Outline}
The rest of this paper is organized as follows.
Section~\ref{fo_pb:pre} introduces some preliminary concepts. Sections~\ref{fo_pb:alg} introduces the proposed algorithm and analyzes its convergence properties. Simulations are presented in Section~\ref{fo_pb:exp}. Finally, concluding remarks are offered in Section~\ref{fo_pb:con}. 

\noindent {\bf Notations}: $\mathbb{N}_0$ and $\mathbb{N}_+$ denote the set of nonnegative and positive integers, respectively. $[n]$ denotes the set $\{1,\dots,n\}$ for any $n\in\mathbb{N}_+$.
$\|\cdot\|$ represents the Euclidean norm for vectors or the induced 2-norm for matrices. 
$\|\cdot\|_{p}$ denotes the $p$-norm for vectors.
Given a differentiable function $f$, $\nabla f$ denotes the gradient of $f$.
${\bm 1}_n$ (${\bm 0}_n$) denotes the column one (zero) vector of dimension $n$. $\col(z_1,\dots,z_k)$ is the concatenated column vector of vectors $z_i\in\mathbb{R}^{p_i},~i\in[k]$. ${\bm I}_n$ is the $n$-dimensional identity matrix. Given a vector $[x_1,\dots,x_n]^\top\in\mathbb{R}^n$, $\diag([x_1,\dots,x_n])$ is a diagonal matrix with the $i$-th diagonal element being $x_i$.  The notation $A\otimes B$ denotes the Kronecker product
of matrices $A$ and $B$. Moreover, we denote $\bsx=\col(x_1,\dots,x_n)$, $\bar{x}=\frac{1}{n}({\bm 1}_n^\top\otimes{\bm I}_p)\bsx$, $\bar{\bsx}={\bm 1}_n\otimes\bar{x}$.
$\rho(\cdot)$ stands for the spectral radius for matrices and $\rho_2(\cdot)$ indicates the minimum
positive eigenvalue for matrices having positive eigenvalues.

\section{Preliminaries}\label{fo_pb:pre}
\subsection{Graph Theory}
Agents communicate with their neighbors through an underlying network, which is modeled by an undirected graph $\mathcal G=(\mathcal V,\mathcal E)$, where $\mathcal V =\{1,\dots,n\}$ is the agent set, $\mathcal E
\subseteq \mathcal V \times \mathcal V$ is the edge set, and $(i,j)\in \mathcal E$ if agents $i$ and $j$ can communicate with each other.
For an undirected graph $\mathcal G=(\mathcal V,\mathcal E)$, let $\mathcal{A}=(a_{ij})$ be the associated weighted adjacency matrix with $a_{ij}>0$ if $(i,j)\in \mathcal E$ if $a_{ij}>0$ and zero otherwise. It is assumed that $a_{ii}=0$ for all $i\in [n]$. Let $\deg_i=\sum\limits_{j=1}^{n}a_{ij}$ denotes the weighted degree of vertex $i$. The degree matrix of graph $\mathcal G$ is $\Deg=\diag([\deg_1, \cdots, \deg_n])$. The Laplacian matrix is $L=(L_{ij})=\Deg-\mathcal{A}$. Additionally, we denote $K_n={\bm I}_n-\frac{1}{n}{\bm 1}_n{\bm 1}^{\top}_n$, $\bsL=L\otimes {\bm I}_p$, $\bsK=K_n\otimes {\bm I}_p$, $\bsH=\frac{1}{n}({\bm 1}_n{\bm 1}_n^\top\otimes{\bm I}_p)$. Moreover, from Lemmas~1 and 2 in \cite{Yi2018distributed}, we know there exists an orthogonal matrix $[r \ R]\in \mathbb{R}^{n \times n}$ with $r=\frac{1}{\sqrt{n}}\mathbf{1}_n$ and $R \in \mathbb{R}^{n\times (n-1)}$ such that $R\Lambda_1^{-1}R^{\top}L=LR\Lambda_1^{-1}R^{\top}=K_n$, and $\frac{1}{\rho(L)}K_n\leq R\Lambda_1^{-1}R^{\top}\le\frac{1}{\rho_2(L)}K_n$, where $\Lambda_1=\diag([\lambda_2,\dots,\lambda_n])$ with $0<\lambda_2\leq\dots\leq\lambda_n$ being the eigenvalues of the Laplacian matrix $L$.

\subsection{Smooth Function}
\begin{definition}
A function $f(x):~\mathbb{R}^p\mapsto\mathbb{R}$ is smooth with constant $L_f>0$ if it is differentiable and
\begin{align}\label{nonconvex:smooth}
\|\nabla f(x)-\nabla f(y)\|\le L_{f}\|x-y\|,~\forall x,y\in \mathbb{R}^p.
\end{align}
\end{definition}

\subsection{Powerball Term}\label{fo_pb:pbterm}
Define the function 
\begin{equation}\label{pb:def}
\sigma(x, \gamma) = \sgn(x) |x|^{\gamma}
\end{equation}
where $\gamma \in [\frac{1}{2}, 1]$.

Note that when $\gamma = 1$, $\sigma(x, 1)$ reduces to $x$.
Unlike the ``powerball'' terms in \cite{zhou2020pbsgd} and \cite{yuan2019powerball}, under distributed settings, the range of $\gamma$ has to be modified.
\subsection{Assumptions}

\begin{assumption}\label{fo_pb:ass:graph}
The undirected graph $\mathcal G$ is connected.
\end{assumption}

\begin{assumption}\label{fo_pb:ass:optset}
The optimal set $\mathbb{X}^*$ is nonempty and the optimal value $f^*>-\infty$.
\end{assumption}

\begin{assumption}\label{fo_pb:ass:local}
Each local cost function $f_i$ is smooth with constant $L_{f}>0$, i.e.,
\begin{align}
\|\nabla f_i(x)-\nabla f_i(y)\|\le L_{f}\|x-y\|,~\forall x,y\in \mathbb{R}^p.
\end{align}
\end{assumption}

\begin{assumption}\label{fo_pb:ass:stochastic-grad:xi}
The random variables $\xi_{i,k}$ in agent $i$, where $i\in[n],~k\in\mathbb{N}_0$ are independent of each other.
\end{assumption}

\begin{assumption}\label{fo_pb:ass:stochastic-grad:mean}
The stochastic estimate $\nabla_{i}(x,\xi_{i,k})$ is unbiased, i.e., for all $i\in[n]$, $k\in\mathbb{N}_0$, and $x\in\mathbb{R}^p$,
\begin{align}\label{sgd:ass:stochastic-grad:mean-equ}
\mathbb{E}_{\xi_{i,k}}[\nabla_{i}(x,\xi_{i,k})]=\nabla f_{i}(x).
\end{align}
\end{assumption}

\begin{assumption}\label{fo_pb:ass:stochastic-grad:variance}
The stochastic estimate $\nabla_{i}(x,\xi_{i,k})$ has bounded variance, i.e., there exists a constant $\sigma$ such that for all $i\in[n]$, $k\in\mathbb{N}_0$, and $x\in\mathbb{R}^p$,
\begin{align}\label{fo_pb:ass:stochastic-grad:variance-equ}
\mathbb{E}_{\xi_{i,k}}[\|\nabla_{i}(x,\xi_{i,k})-\nabla f_{i}(x)\|^2]\le\sigma^2.
\end{align}
\end{assumption}

\section{Proposed Algorithm}\label{fo_pb:alg}

\subsection{Algorithm Description}
We consider the novel distributed primal-dual scheme~\cite{yi2019linear, yi2020primal} with a mini-batch variation in~\eqref{fopb:sgd-minibatch} to get the estimations of gradient per agent per iteration.
\begin{equation}\label{fopb:sgd-minibatch}
\nabla_{i}(x_{i,k},\xi_{i,k}) = \frac{1}{B}\sum_{b = 1}^{B} \nabla_{i}(x_{i,k, b},\xi_{i,k, b})
\end{equation}
Then we apply the ``powerball'' term described in~\eqref{pb:def} directly on the  estimations of gradient.
We summarize the proposed method as Algorithm~\ref{fopb:algorithm-random-pd}.

\begin{algorithm}[!ht]
\caption{DSGPA-F-PB}
\label{fopb:algorithm-random-pd}
\begin{algorithmic}[1]
\STATE \textbf{Input}: positive number $\alpha$, $\beta$, $\eta$, and $\gamma$.
\STATE \textbf{Initialize}: $ x_{i,0}\in\mathbb{R}^p$ and $v_{i,0}={\bm 0}_p,~
\forall i\in[n]$.
\FOR{$k=0,1,\dots$}
\FOR{$i=1,\dots,n$  in parallel}
\STATE  Broadcast $x_{i,k}$ to $\mathcal{N}_i$ and receive $x_{j,k}$ from $j\in\mathcal{N}_i$;
\STATE  Sample stochastic gradient $\nabla_{i}(x_{i,k},\xi_{i,k})$;
\STATE  Update $x_{i,k+1}$ by \eqref{fopb:alg:random-pd-x};
\STATE  Update $v_{i,k+1}$ by \eqref{fopb:alg:random-pd-q}.
\ENDFOR
\ENDFOR
\STATE  \textbf{Output}: $\{\bsx_{k}\}$.
\end{algorithmic}
\end{algorithm}

\begin{subequations}\label{fopb:alg:random-pd}
\begin{align}
x_{i,k+1} &= x_{i,k}-\eta\Big(\alpha\sum_{j=1}^nL_{ij}x_{j,k}+\beta v_{i,k}\nonumber \\+&\sigma\Big(\nabla_{i}(x_{i,k},\xi_{i,k}), \gamma\Big)\Big),  \label{fopb:alg:random-pd-x}\\
v_{i,k+1} &=v_{i,k}+ \eta\beta\sum_{j=1}^n L_{ij}x_{j,k}, \nonumber\\ &\forall x_{i,0}\in\mathbb{R}^p, ~\sum_{j=1}^nv_{j,0}={\bm 0}_p,~
\forall i\in[n].  \label{fopb:alg:random-pd-q}
\end{align}
\end{subequations}

\subsection{Convergence Analysis}

\begin{theorem}\label{fo_pb:thm}
Suppose Assumptions~\ref{fo_pb:ass:graph}--\ref{fo_pb:ass:stochastic-grad:variance} hold. 
For any given $T>n^3$, let $\{\bsx_k,k=0,\dots,T\}$ be the output sequence generated by Algorithm~\ref{fopb:algorithm-random-pd} with
\begin{align}\label{fo_pb:para}
\alpha=\kappa_1\beta,~\beta=\kappa_2\sqrt{T}/\sqrt{n},~ \eta=\frac{\kappa_2}{\beta},~\forall k\in\mathbb{N}_0,
\end{align}
where $\kappa_1>\frac{1}{\rho_2(L)}+1$, and $\kappa_2\in\Big(0,\min\{\frac{(\kappa_1-1)\rho_2(L)-1}{\rho(L)+(2\kappa_1^2+1)\rho(L^2)+1},\frac{1}{5}\}\Big)$, then we have,
\begin{subequations}
\begin{align}
&\frac{1}{T}\sum_{k=0}^{T-1}\mathbb{E}\Big[\frac{1}{n}\sum_{i=1}^{n}\|x_{i,k}-\bar{x}_k\|^2\Big]
=\mathcal{O}(\frac{n}{T}),\label{fo_pb:coro-sg-sm-equ3.1}\\
&\frac{1}{T}\sum_{k=0}^{T-1}\mathbb{E}[\|\nabla f(\bar{x}_k)\|_{1+\gamma}^2]
={\mathcal{O}(\frac{1}{\sqrt{nT}})+\mathcal{O}(\frac{n}{T})},\label{fo_pb:coro-sg-sm-equ3}\\
&\mathbb{E}[f(\bar{x}_{T})]-f^*=\mathcal{O}(1).\label{fo_pb:coro-sg-sm-equ4}
\end{align}
\end{subequations}
where $\bar{x}_k=\frac{1}{n}\sum_{i=1}^{n}x_{i,k}$.
\end{theorem}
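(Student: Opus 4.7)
The plan is to adapt the primal-dual Lyapunov framework of \cite{yi2019linear, yi2020primal} to accommodate the componentwise powerball map $\sigma(\cdot,\gamma)$. Stacking the iterates, the algorithm reads
\[
\bsx_{k+1} = \bsx_k - \eta\Big(\alpha\bsL\bsx_k + \beta\bsv_k + \sigma(\bsg_k,\gamma)\Big),\qquad \bsv_{k+1} = \bsv_k + \eta\beta\bsL\bsx_k,
\]
with $\bsg_k = \col(\nabla_1(x_{1,k},\xi_{1,k}),\dots,\nabla_n(x_{n,k},\xi_{n,k}))$. Since $\bsone_n^\top L=0$ and $\sum_j v_{j,0}=\bszero$, one first verifies $\bar v_k\equiv\bszero$ for all $k$, so averaging the primal update yields the clean recursion
\[
\bar x_{k+1} = \bar x_k - \frac{\eta}{n}\sum_{i=1}^n \sigma\big(\nabla_i(x_{i,k},\xi_{i,k}),\gamma\big).
\]

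Applying the descent lemma to the smooth $f$ and taking conditional expectation, the inner product $\langle \nabla f(\bar x_k),\,\tfrac1n\sum_i\sigma(\nabla_i(x_{i,k},\xi_{i,k}),\gamma)\rangle$ is the central quantity. The decisive identity $\langle g,\sigma(g,\gamma)\rangle = \|g\|_{1+\gamma}^{1+\gamma}$ is exactly what forces the $(1+\gamma)$-norm in (9b). By adding and subtracting $\sigma(\nabla f_i(\bar x_k),\gamma)$ and $\sigma(\nabla f(\bar x_k),\gamma)$, the ``oracle'' piece contributes a positive multiple of $\|\nabla f(\bar x_k)\|_{1+\gamma}^{1+\gamma}$ and three residuals appear: a consensus residual in $\|\bsx_k - \bar{\bsx}_k\|$, a heterogeneity residual in $\|\nabla f_i(\bar x_k)-\nabla f(\bar x_k)\|$, and a stochastic residual controlled through Assumption~\ref{fo_pb:ass:stochastic-grad:variance}. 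All three are estimated with the Hölder bound $|\sigma(a,\gamma)-\sigma(b,\gamma)|\le 2|a-b|^\gamma$ for $\gamma\in(0,1]$ combined with Jensen's inequality; the requirement $\gamma\ge 1/2$ is invoked precisely so that $2\gamma\ge 1$ and these fractional-power residuals can be absorbed into ordinary $\ell_2$ consensus and noise terms.

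To control $\|\bsx_k - \bar{\bsx}_k\|^2$ I would use a composite Lyapunov function of the form
\[
W_k = f(\bar x_k) - f^* + c_1\|\bsx_k - \bar{\bsx}_k\|^2 + c_2\,\bsv_k^\top\big(R\Lambda_1^{-1}R^\top\otimes\bsI_p\big)\bsv_k + c_3\langle\bsx_k - \bar{\bsx}_k,\bsv_k\rangle,
\]
whose cross term is the standard primal-dual ``link'' that, together with the spectral sandwich $\tfrac{1}{\rho(L)}\bsK \preceq R\Lambda_1^{-1}R^\top\otimes\bsI_p \preceq \tfrac{1}{\rho_2(L)}\bsK$ recalled in the preamble, makes the one-step increment of $W_k$ negative definite in $(\bsx_k-\bar{\bsx}_k,\bsv_k)$. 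The scalar conditions $\kappa_1>1/\rho_2(L)+1$ and the upper bound on $\kappa_2$ in~(8) are exactly what close this definiteness argument and simultaneously dominate the quadratic step $\|\sigma(\bsg_k,\gamma)\|^2\le np+\|\bsg_k\|^2$ produced by the descent lemma. Summing the resulting inequality $\mathbb{E}[W_{k+1}]-\mathbb{E}[W_k]\le -c_4\eta\,\mathbb{E}\|\nabla f(\bar x_k)\|_{1+\gamma}^{1+\gamma} - c_5\eta\,\mathbb{E}\|\bsx_k-\bar{\bsx}_k\|^2 + c_6\eta^2 n\sigma^2$ over $k=0,\dots,T-1$, dividing by $T\eta$, and substituting the choices in~(8) (so $\eta\beta=\kappa_2^2$ is fixed while $\eta=\Theta(\sqrt{n/T})$), the variance term becomes $\mathcal{O}(1/\sqrt{nT})$ and the consensus/bias terms become $\mathcal{O}(n/T)$, yielding (9a) and (9b); (9c) then drops out from $f(\bar x_T)-f^*\ge 0$ and the telescoping bound on $W_T-W_0$. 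The hypothesis $T>n^3$ is what ensures the $\mathcal{O}(n/T)$ correction is dominated by the leading $\mathcal{O}(1/\sqrt{nT})$ linear-speedup term.

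The hardest step will be handling the non-Lipschitz powerball map. Unbiasedness of $\nabla_i(\cdot,\xi)$ does \emph{not} transfer through $\sigma$, so in place of the usual zero-mean martingale cancellation one must carry a bias of order $\mathbb{E}[\|\nabla_i(\cdot,\xi)-\nabla f_i(\cdot)\|^{2\gamma}]^{1/2}\le \sigma^\gamma$ and reconcile the mismatch between the $\|\cdot\|_{1+\gamma}$ and $\|\cdot\|_2$ norms without destroying the linear-speedup rate. This is where the mini-batching~(4), the conservative parameter choice~(8), and the hypothesis $T>n^3$ jointly earn their keep.
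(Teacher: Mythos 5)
Your high-level architecture (primal--dual Lyapunov function with a cross term, telescoping a one-step descent inequality, then substituting the step sizes) matches the paper's, but two concrete gaps would prevent the argument from closing. First, your Lyapunov function penalizes $\bsv_k$ itself (via $c_2\,\bsv_k^\top(R\Lambda_1^{-1}R^\top\otimes\bsI_p)\bsv_k$ and $c_3\langle \bsx_k-\bar{\bsx}_k,\bsv_k\rangle$), and you assert the increment is negative definite in $(\bsx_k-\bar{\bsx}_k,\bsv_k)$. That cannot hold: at any stationary configuration of the dynamics the dual variable must cancel the disagreement of the local gradients, so $\bsv$ converges to a generically nonzero point of order $\frac{1}{\beta}\nabla f_i$. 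The paper's $W_{2,k}$ and $W_{3,k}$ are built on the \emph{shifted} coordinate $\bsv_k+\frac{1}{\beta}\bsg_k^0$ with $\bsg_k^0=n\nabla f(\bar{\bsx}_k)$, and the entire cancellation in parts (i)--(iii) of the appendix proof runs through that shift; without it the quadratic-form bookkeeping does not produce a dissipation term in the dual error.

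Second, your mechanism for producing the $(1+\gamma)$-norm is different from the paper's and, as set up, does not deliver the stated bound. You route everything through $\langle g,\sigma(g,\gamma)\rangle=\|g\|_{1+\gamma}^{1+\gamma}$ after swapping $\sigma(\nabla_i(x_{i,k},\xi_{i,k}),\gamma)$ for $\sigma(\nabla f(\bar x_k),\gamma)$. This (i) yields the exponent $1+\gamma$ rather than the exponent $2$ appearing in \eqref{fo_pb:coro-sg-sm-equ3}, and (ii) leaves residuals bounded by $|\sigma(a,\gamma)-\sigma(b,\gamma)|\le 2|a-b|^{\gamma}$, i.e.\ terms of order $\|\bsx_k-\bar{\bsx}_k\|^{2\gamma}$ and a stochastic bias of order $\sigma^{\gamma}$. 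Neither is absorbable: for $\gamma<1$ the fractional power $\|\cdot\|^{2\gamma}$ dominates $\|\cdot\|^{2}$ precisely in the small-consensus-error regime, so it cannot be hidden in the $-c_5\eta\|\bsx_k-\bar{\bsx}_k\|^2$ term, and the $O(\sigma^{\gamma})$ bias enters the descent inequality multiplied by a single factor of $\eta$, so after summing and dividing by $T\eta$ it leaves an $O(\sigma^{\gamma})$ constant that destroys the $\mathcal{O}(1/\sqrt{nT})$ rate; fixed-size mini-batching and $T>n^3$ do not remove it. The paper avoids ever taking an expectation or an inner product of $\sigma(\bsg_k^u,\gamma)$ against the true gradient in this way: the powerball term is kept only inside squared norms, where Lemma~\ref{fo_pb:lemma:pb} (a H\"older bound giving $\|\sigma(g,\gamma)\|^2\le\|g\|_{1+\gamma}^2$) controls it, while the negative gradient term $-\frac{1}{4}\eta\|\bar{\bsg}_k^0\|^2$ is generated from the ordinary smoothness descent in $W_{4,k}$. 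You correctly flag the non-transfer of unbiasedness as the hardest step, but the proposal does not actually supply the argument that resolves it.
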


Before proving Theorem~\ref{fo_pb:thm}, we introduce the following useful lemmas.

\begin{lemma}\label{fo_pb:lemma:pb}
By using the powerball term in~\eqref{pb:def} and when $\gamma \in [\frac{1}{2}, 1)$, we have $\Big\Vert\sigma\Big(\nabla_{i}(x_{i,k},\xi_{i,k}), \gamma\Big)\Big\Vert^2 \leq \Big\Vert \nabla_{i}(x_{i,k},\xi_{i,k}) \Big\Vert^{2}_{1+\gamma}$.
\end{lemma}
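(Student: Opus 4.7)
The claim is a pointwise/component-level identity followed by an elementary $\ell^p$ comparison, so the plan is compact. Let $g := \nabla_{i}(x_{i,k},\xi_{i,k}) \in \mathbb{R}^p$ with components $g_\ell$. I would first invoke \eqref{pb:def} to observe that the powerball operator is applied entrywise, so $[\sigma(g,\gamma)]_\ell = \sgn(g_\ell)\,|g_\ell|^{\gamma}$. Squaring the Euclidean norm then gives the explicit form
\[
\|\sigma(g,\gamma)\|^{2} \;=\; \sum_{\ell=1}^{p}\bigl(\sgn(g_\ell)|g_\ell|^{\gamma}\bigr)^{2} \;=\; \sum_{\ell=1}^{p}|g_\ell|^{2\gamma}.
\]
With the left-hand side in this form, the right-hand side $\|g\|_{1+\gamma}^{2}=\bigl(\sum_{\ell}|g_\ell|^{1+\gamma}\bigr)^{2/(1+\gamma)}$ becomes directly comparable, and the restriction $\gamma \in [\frac{1}{2},1)$ enters precisely as the hypothesis under which the two exponents $2\gamma$ and $1+\gamma$ are ordered consistently ($2\gamma \le 1+\gamma$).

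The plan for the inequality step is to reparametrize with $u_\ell := |g_\ell|^{1+\gamma}$, turning the desired bound into a statement of the form $\sum_\ell u_\ell^{q} \le \bigl(\sum_\ell u_\ell\bigr)^{2/(1+\gamma)}$ with $q = 2\gamma/(1+\gamma) \in [2/3,1)$. From here I would close the proof using a standard $\ell^p$-monotonicity / power-mean (Jensen) inequality on the nonnegative sequence $\{u_\ell\}$. The limit case $\gamma = 1$ is excluded because there $\sigma(x,1)=x$ and both sides collapse to $\|g\|^{2}$, so the strict-inequality structure one is trying to exploit is degenerate; the lower cutoff $\gamma \ge \tfrac{1}{2}$ is what keeps $q \ge \tfrac{2}{3}$ away from zero and hence keeps the exponent comparison in the regime where the needed $p$-norm inequality is oriented correctly.

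I do not anticipate a substantive obstacle: the lemma is free of probabilistic content, has no recursion, and reduces after one line of unfolding to a purely real-analytic exponent-bookkeeping argument. The only point that calls for care is matching the exponents cleanly — in particular tracking how the factor of $2$ (from the Euclidean square) interacts with the $1/(1+\gamma)$ (from the $p$-norm root), and verifying that the resulting power on $\|g\|_{1+\gamma}$ is indeed $2$ and not $2\gamma$. Once that arithmetic is pinned down, the bound is a direct one-line invocation of the standard inequality.
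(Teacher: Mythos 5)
Your reduction is set up correctly---$\|\sigma(g,\gamma)\|^{2}=\sum_{\ell}|g_\ell|^{2\gamma}$ and, with $u_\ell=|g_\ell|^{1+\gamma}$ and $q=\frac{2\gamma}{1+\gamma}\in[\frac{2}{3},1)$, the claim becomes $\sum_{\ell}u_\ell^{q}\le\bigl(\sum_{\ell}u_\ell\bigr)^{2/(1+\gamma)}$---but the ``standard $\ell^p$-monotonicity / power-mean inequality'' you plan to invoke to close this step does not exist in the direction you need. For $q\in(0,1)$, monotonicity of the $\ell^{q}$ quasi-norm gives $\sum_{\ell}u_\ell^{q}\ge\bigl(\sum_{\ell}u_\ell\bigr)^{q}$, the opposite orientation, and the target inequality is in fact false: already for $p=1$ and $u_1\in(0,1)$ one has $u_1^{q}>u_1^{2/(1+\gamma)}$ because $q<\frac{2}{1+\gamma}$. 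Equivalently, for a single coordinate $g_1=t\in(0,1)$ the lemma asserts $t^{2\gamma}\le t^{2}$, which fails for $\gamma<1$. The obstruction is structural rather than a bookkeeping slip: under $g\mapsto\lambda g$ the left side scales as $\lambda^{2\gamma}$ while the right side scales as $\lambda^{2}$, so no scale-free norm comparison can produce the stated bound, and it genuinely fails for gradients of small magnitude. This is precisely the exponent mismatch (the factor $2$ versus $2\gamma$ on $\|g\|_{1+\gamma}$) that you flagged at the end as ``the only point that calls for care.''

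For comparison, the paper's own proof applies H\"older's inequality with conjugate exponents $c=\frac{1+\gamma}{1-\gamma}$ and $d=\frac{1+\gamma}{2\gamma}$ to obtain $\sum_{\ell}|g_\ell|^{2\gamma}\le\|\bm{1}\|_{c}\,\|g\|_{1+\gamma}^{2\gamma}$ with $\|\bm{1}\|_{c}=p^{(1-\gamma)/(1+\gamma)}$, and then asserts $\|\bm{1}\|_{c}\,\|g\|_{1+\gamma}^{2\gamma}\le\|g\|_{1+\gamma}^{2}$. That final step is exactly the same jump from exponent $2\gamma$ to exponent $2$ that your plan requires (and it additionally discards the dimension factor $p^{(1-\gamma)/(1+\gamma)}\ge1$); it would need $\|g\|_{1+\gamma}\ge p^{1/(2(1+\gamma))}$, which is nowhere assumed. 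So your difficulty is not an artifact of your route: the lemma as stated needs either an additional lower bound on the gradient magnitude or a restatement with $p^{(1-\gamma)/(1+\gamma)}\|g\|_{1+\gamma}^{2\gamma}$ on the right-hand side, and any correct proof of the corrected statement will look essentially like your H\"older/exponent-matching setup stopped one line earlier.
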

\begin{proof}
Consider $\gamma \in [\frac{1}{2}, 1)$, let $c = \frac{1+\gamma}{1-\gamma}$, $d = \frac{1+\gamma}{2\gamma}$ and apply H\"{o}lder's inequality, we have
\begin{align*}
&\Big\Vert\sigma\Big(\nabla_{i}(x_{i,k},\xi_{i,k}), \gamma\Big)\Big\Vert^2 = \sum ^{p}_{l = 1}\Big|\Big(\nabla_{i}(x_{i,k},\xi_{i,k})\Big)_{l}\Big|^{2\gamma} \\
&\leq (\sum ^{p}_{l = 1} 1^{c})^{\frac{1}{c}} \Big(\sum ^{p}_{l = 1}\Big(\Big|\Big(\nabla_{i}(x_{i,k},\xi_{i,k})\Big)_{l}\Big|^{2\gamma}\Big)^{d}\Big)^{\frac{1}{d}} \\
&\leq \Vert \bm{1}\Vert_{c} \Big(\sum ^{p}_{l = 1} \Big|\Big(\nabla_{i}(x_{i,k},\xi_{i,k})\Big)_{l}\Big|^{1+\gamma}\Big)^{\frac{2\gamma}{1 + \gamma}} \\
& = \Vert \bm{1}\Vert_{c} \Big\Vert \nabla_{i}(x_{i,k},\xi_{i,k}) \Big\Vert^{2\gamma}_{1+\gamma}\\
& \leq \Big\Vert \nabla_{i}(x_{i,k},\xi_{i,k}) \Big\Vert^{2}_{1+\gamma}
\end{align*}
where $\Big(\nabla_{i}(x_{i,k},\xi_{i,k})\Big)_{l}$ is the $l$-th coordinate of $\nabla_{i}(x_{i,k},\xi_{i,k})$.
\end{proof}

\begin{lemma}\label{fo_pb:lemma:lya}
Suppose Assumptions~\ref{fo_pb:ass:graph} -~\ref{fo_pb:ass:stochastic-grad:variance} hold, and we have parameters chosen as in~\eqref{fo_pb:para}, for simplicity, we denote $g_{i, k}^{u} = \nabla_{i}(x_{i,k},\xi_{i,k})$ for agent $i$ at iteration $k$, and $\bsg^u_k=\col(g^u_{1,k},\dots,g^u_{n,k})$, 
$\bsg^0_k=n \nabla f(\bar{\bsx}_{k})$, $\bar{\bsg}_k^0=\bsH\bsg^0_{k}$.
Let $\{\bsx_k\}$ be the sequence generated by Algorithm~\ref{fopb:algorithm-random-pd}, then we have
\begin{subequations}
\begin{align}
\mathbb{E}[W_{k+1}]
&\le   W_{k}-\kappa_4\|\bsx_k\|^2_{\bsK}\nonumber\\
&\quad-\frac{1}{2}(\kappa_2 - 5\kappa_2^2)\Big\|\bm{v}_k+\frac{1}{\beta}\bsg_{k}^0\Big\|^2_{\bsK}\nonumber\\
&\quad-\frac{1}{4}\eta\|\bar{\bsg}^0_{k}\|_{1+\gamma}^2+\mathcal{O}(n)\sigma^2\eta^2,\label{fo_pb:sgproof-vkLya2T}
\end{align}
\begin{align}
&\mathbb{E}[W_{4,k+1}] \le W_{4,k} + \|\bsx_k\|^2_{\frac{1}{2}\eta L_f^2\bsK} + L_f^{2}\sigma^{2}\eta^{2},\label{fo_pb:sgproof-vkLya2T_W4_og}
\end{align}
\begin{align}
&\mathbb{E}\Big[W_{k+1}+\frac{\kappa_4}{L_f^{2}}W_{4, k+1}\Big]
\le W_{k}-\frac{1}{2}\kappa_4\|\bsx_k\|^2_{\bsK}\nonumber\\
&\quad-\frac{1}{2}(\kappa_2 - 5\kappa_2^2)\Big\|\bm{v}_k+\frac{1}{\beta}\bsg_{k}^0\Big\|^2_{\bsK}\nonumber\\
&\quad-\frac{1}{4}\frac{\kappa_4}{L_f^{2}}\|\bar{\bsg}^0_{k}\|_{1+\gamma}^2+\mathcal{O}(n)\sigma^2\eta^2 + \frac{\kappa_4}{L_f}\sigma^{2}\eta,\label{fo_pb:sgproof-vkLya2T_W4}
\end{align}
\end{subequations}
\end{lemma}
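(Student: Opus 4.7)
The plan is a standard Lyapunov descent analysis tailored to the primal-dual dynamics of Algorithm~\ref{fopb:algorithm-random-pd} with the powerball nonlinearity. The Lyapunov $W_k$ is expected to combine three pieces: an objective-gap term built from $f(\bar{x}_k)$ via $L_f$-smoothness, a consensus-error penalty $\|\bsx_k\|_{\bsK}^2$ driven by the Laplacian coupling in \eqref{fopb:alg:random-pd-x}, and a dual-tracking term of the form $\|\bm{v}_k + \frac{1}{\beta}\bsg_k^0\|_{\bsK}^2$ coming from \eqref{fopb:alg:random-pd-q}; the auxiliary $W_{4,k}$ is a gradient-type quantity (essentially $\|\bsg_k\|^2$ with $\bsg_k = \col(\nabla f_i(x_{i,k}))$) introduced to absorb the mismatch between $\sigma(\nabla_i(x_{i,k},\xi_{i,k}),\gamma)$ and its mean analogue that the smoothness bound creates. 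All three inequalities follow from a common one-step calculation combined with conditional expectation over $\xi_{i,k}$.

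For inequality~\eqref{fo_pb:sgproof-vkLya2T}, I would stack the update as $\bsx_{k+1} = \bsx_k - \eta(\alpha \bsL\bsx_k + \beta\bm{v}_k + \sigma(\bsg^u_k,\gamma))$ and split the direction as $\sigma(\bsg^u_k,\gamma) = \sigma(\bsg^0_k,\gamma) + [\sigma(\bsg^u_k,\gamma) - \sigma(\bsg^0_k,\gamma)]$. The averaged iterate satisfies $\bar{x}_{k+1} = \bar{x}_k - \eta\overline{\sigma(\bsg^u_k,\gamma)}$ because $(\bm{1}^\top \otimes \bm{I})\bsL = 0$ and because $\bsH\bm{v}_k = 0$ under the initialization $\sum_j v_{j,0} = 0$ preserved by \eqref{fopb:alg:random-pd-q}. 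Plugging this into the smoothness descent lemma for $f$, taking expectation (using Assumption~\ref{fo_pb:ass:stochastic-grad:mean} to replace $\bsg^u_k$ by its mean in linear terms and Assumption~\ref{fo_pb:ass:stochastic-grad:variance} to absorb variance into $\mathcal{O}(n)\sigma^2\eta^2$), and invoking Lemma~\ref{fo_pb:lemma:pb} on the quadratic term $\|\sigma(\bsg^u_k,\gamma)\|^2$ converts it to the $(1+\gamma)$-norm form; the key inner product $\langle \nabla f(\bar{x}_k), \overline{\sigma(\bsg^0_k,\gamma)}\rangle$ is handled coordinatewise using $x\,\sigma(x,\gamma) = |x|^{1+\gamma}$, which is precisely what produces $-\tfrac{1}{4}\eta\|\bar{\bsg}^0_k\|_{1+\gamma}^2$. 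The $-\kappa_4\|\bsx_k\|_{\bsK}^2$ contraction comes from expanding $\|\bsx_{k+1}\|_{\bsK}^2$ and using the spectral bound $\bsK \preceq \rho(L)^{-1} \bsL$ together with $\alpha = \kappa_1 \beta$ and the constraint $\kappa_1 > 1/\rho_2(L) + 1$. The $-\tfrac{1}{2}(\kappa_2 - 5\kappa_2^2)\|\bm{v}_k + \beta^{-1}\bsg^0_k\|_{\bsK}^2$ contraction is obtained by expanding $\|\bm{v}_{k+1} + \beta^{-1}\bsg^0_{k+1}\|_{\bsK}^2$ via \eqref{fopb:alg:random-pd-q}, using $\kappa_2 \le 1/5$ to dominate the cross terms, and relying on the upper bound on $\kappa_2$ in the theorem to force the residual Laplacian cross terms to be absorbed.

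Inequality~\eqref{fo_pb:sgproof-vkLya2T_W4_og} is comparatively mechanical: expand $W_{4,k+1}$ using the update, bound local gradient differences by $L_f$ times iterate differences via Assumption~\ref{fo_pb:ass:local}, split the iterate change into the deterministic part (which is linear in $\bsL\bsx_k$ and thus controlled by $\|\bsx_k\|_{\bsK}^2$ after spectral estimates) and the stochastic part (whose variance contributes $L_f^2\sigma^2\eta^2$ after Assumption~\ref{fo_pb:ass:stochastic-grad:variance}). No powerball-specific machinery is needed because $\|\sigma(\cdot,\gamma)\|^2$ is already controlled by Lemma~\ref{fo_pb:lemma:pb} and the $(1+\gamma)$-norm is bounded above by the $2$-norm in finite dimension.

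Finally, \eqref{fo_pb:sgproof-vkLya2T_W4} is just the linear combination of the previous two bounds with weight $\kappa_4/L_f^2$ on~\eqref{fo_pb:sgproof-vkLya2T_W4_og}, chosen so that the contribution $\tfrac{1}{2}\kappa_4\|\bsx_k\|_{\bsK}^2$ coming from $W_{4,k}$ exactly cancels half of the $-\kappa_4\|\bsx_k\|_{\bsK}^2$ from \eqref{fo_pb:sgproof-vkLya2T}, leaving $-\tfrac{1}{2}\kappa_4\|\bsx_k\|_{\bsK}^2$. The main obstacle is the coordinate-wise treatment of the powerball term: the descent step is driven by $\sigma(\bsg^u_k,\gamma)$, not by $\bsg^u_k$, so the usual inner-product descent identity $\langle \nabla f,-\eta\nabla f\rangle = -\eta\|\nabla f\|^2$ becomes $\langle \nabla f, -\eta\sigma(\nabla f,\gamma)\rangle = -\eta\|\nabla f\|_{1+\gamma}^{1+\gamma}$, and reconciling this with the quadratic remainder in the descent lemma (bounded via Lemma~\ref{fo_pb:lemma:pb}) so that the net descent is proportional to $\|\bar{\bsg}^0_k\|_{1+\gamma}^2$ requires a careful Hölder-type interpolation between the two norms; all stepsize restrictions and the specific coefficient $1/4$ emerge from forcing this interpolation to be non-vacuous for every $\gamma \in [1/2,1)$.
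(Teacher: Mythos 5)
Your overall plan (a Lyapunov descent combining consensus error, dual tracking, and an objective gap, with the stochastic variance absorbed into $\mathcal{O}(n)\sigma^2\eta^2$) is the right genre, but two of your structural guesses are wrong in ways that would prevent the argument from closing. The paper's Lyapunov function is $W_k=\frac12\|\bsx_k\|^2_{\bsK}+\frac12\big\|\bsv_k+\frac{1}{\beta}\bsg_k^0\big\|^2_{\bsQ+\kappa_1\bsK}+\bsx_k^\top\bsK\big(\bsv_k+\frac{1}{\beta}\bsg_k^0\big)+n(f(\bar x_k)-f^*)$ with $\bsQ=R\Lambda_1^{-1}R^\top\otimes{\bm I}_p$; you omit both the cross term and the $\bsQ$-weighting, and these are not cosmetic. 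Expanding $\|\bsx_{k+1}\|^2_{\bsK}$ produces $-\eta\beta\,\bsx_k^\top\bsK(\bsv_k+\frac1\beta\bsg_k^0)$, expanding the dual term in the $\bsQ+\kappa_1\bsK$ norm produces $+\eta\beta\,\bsx_k^\top(\bsK+\kappa_1\bsL)(\bsv_k+\frac1\beta\bsg_k^0)$ (using $\bsQ\bsL=\bsK$), and the one-step change of the cross term produces $-\eta\alpha\,\bsx_k^\top\bsL(\bsv_k+\frac1\beta\bsg_k^0)$; only the choice $\alpha=\kappa_1\beta$ makes these $\mathcal{O}(\eta\beta)=\mathcal{O}(\kappa_2)$ contributions cancel. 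With your three-term candidate the $\kappa_1\eta\beta\,\bsx_k^\top\bsL(\cdot)$ piece survives and is not dominated by the claimed contractions. Moreover, $W_{4,k}$ is not a gradient-norm quantity but the objective gap $n(f(\bar x_k)-f^*)$: it is simultaneously a summand of $W_k$ and the subject of \eqref{fo_pb:sgproof-vkLya2T_W4_og}, which is a one-step function-value recursion from $L_f$-smoothness along $\bar x_{k+1}-\bar x_k$. Your reading turns \eqref{fo_pb:sgproof-vkLya2T_W4_og} into a different statement and breaks the bookkeeping behind the weight $\kappa_4/L_f^2$ in \eqref{fo_pb:sgproof-vkLya2T_W4}.

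Separately, your mechanism for producing $-\frac14\eta\|\bar{\bsg}^0_{k}\|_{1+\gamma}^2$ does not go through as stated. The coordinatewise identity $\langle z,\sigma(z,\gamma)\rangle=\|z\|_{1+\gamma}^{1+\gamma}$ yields descent of order $\|\cdot\|_{1+\gamma}^{1+\gamma}$, and there is no uniform interpolation $\|z\|_{1+\gamma}^{1+\gamma}\ge c\,\|z\|_{1+\gamma}^{2}$ once $\|z\|_{1+\gamma}>1$ (since $1+\gamma<2$), so the ``H\"older-type interpolation'' you defer to cannot be carried out without a bounded-gradient assumption that the paper explicitly avoids. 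The paper instead invokes Lemma~\ref{fo_pb:lemma:pb} only where an \emph{upper} bound on $\|\sigma(\bsg^u_k,\gamma)\|^2$ is needed, and extracts the negative descent term from the ordinary inner product $-\eta\,\bar{\bsg}_k^\top\bar{\bsg}_k^0$ in the smoothness step, passing to the $(1+\gamma)$-norm only at the very end. You would need to rebuild your powerball bookkeeping around that asymmetry before the three inequalities can be assembled.
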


\begin{proof}
The proof are given in Appendix~\ref{fo_pb:lyap_lemma}.
\end{proof}

We are now ready to prove Theorem~\ref{fo_pb:lemma:pb}.
\begin{proof}
Denote
\begin{align*}
\hat{V}_k=\|\bm{x}_k\|^2_{\bsK}+\Big\|\bsv_k
+\frac{1}{\beta_k}\bsg_k^0\Big\|^2_{\bsK}+n(f(\bar{x}_k)-f^*).
\end{align*}
We have
\begin{align}
&W_{k}\nonumber\\
&=\frac{1}{2}\|\bsx_{k}\|^2_{\bsK}
+\frac{1}{2}\Big\|\bsv_k+\frac{1}{\beta}\bsg_k^0\Big\|^2_{\bsQ+\kappa_1\bsK}\nonumber\\
&~~~+\bsx_k^\top\bsK\Big(\bm{v}_k+\frac{1}{\beta}\bsg_k^0\Big)+n(f(\bar{x}_k)-f^*)\nonumber\\
&\ge\frac{1}{2}\|\bsx_{k}\|^2_{\bsK}
+\frac{1}{2}\Big(\frac{1}{\rho(L)}+\kappa_1\Big)\Big\|\bsv_k+\frac{1}{\beta}\bsg_k^0\Big\|^2_{\bsK}\nonumber\\
&~~~-\frac{1}{2\kappa_1}\|\bsx_{k}\|^2_{\bsK}
-\frac{\kappa_1}{2}\Big\|\bsv_k+\frac{1}{\beta}\bsg_k^0\Big\|^2_{\bsK}
+n(f(\bar{x}_k)-f^*)\nonumber\\
&\ge\min\Big\{\frac{1}{2\rho(L)},~\frac{\kappa_1-1}{2\kappa_1}\Big\}\hat{V}_k\ge0,\label{fo_pb:vkLya3}
\end{align}
Additionally, we can get $W_{k}\le(\frac{\kappa_1+1}{2}+\frac{1}{2\rho_2(L)})\hat{V}_k$.

From~\eqref{fo_pb:sgproof-vkLya2T}, we have
\begin{align}\label{fo_pb:vkLya4}
&\mathbb{E}[W_{k+1}]\nonumber\\
&\le W_{k}-\kappa_4\|\bsx_k\|^2_{\bsK}
-\frac{\kappa_2}{4\beta}\|\bar{\bsg}^0_{k}\|_{1+\gamma}^2
+\frac{\mathcal{O}(n)\kappa_2^2\sigma^2}{\beta^2}.
\end{align}
Then, taking expectation and summing \eqref{fo_pb:vkLya4} over $ k\in[0,T]$  yield
\begin{align}\label{fo_pb:vkLya4.1}
&\mathbb{E}[W_{T+1}]+\sum_{k=0}^{T}\mathbb{E}\Big[\kappa_4\|\bsx_k\|^2_{\bsK}
+\frac{\kappa_2}{4\beta}\|\bar{\bsg}^0_{k}\|_{1+\gamma}^2\Big]\nonumber\\
&\le W_{0}+\frac{(T+1)\mathcal{O}(n)\kappa_2^2\sigma^2}{\beta^2}.
\end{align}

From~\eqref{fo_pb:vkLya3},~\eqref{fo_pb:vkLya4.1}, and $\kappa_4 > 0$, then we have
\begin{align}\label{fo_pb:thm-sg-sm-equ1.1p}
&\frac{1}{T+1}\sum_{k=0}^{T}\mathbb{E}\Big[\frac{1}{n}\sum_{i=1}^{n}\|x_{i,k}-\bar{x}_k\|^2\Big]\nonumber\\
&
=\frac{1}{n(T+1)}\sum_{k=0}^{T}\mathbb{E}[\|\bsx_k\|^2_{\bsK}]\nonumber\\
&\le\frac{W_{0}}{n\kappa_4(T+1)}
+\frac{\mathcal{O}(n)\kappa_2^2\sigma^2}{n\kappa_4\beta^2}.
\end{align}
Consider that $W_0=\mathcal{O}(n)$ and $\beta=\kappa_2\sqrt{T}/\sqrt{n}$, from \eqref{fo_pb:thm-sg-sm-equ1.1p}, we have~\eqref{fo_pb:coro-sg-sm-equ3.1}.

Taking expectation and summing~\eqref{fo_pb:sgproof-vkLya2T_W4_og} over $ k\in[0,T]$  yield
\begin{align}\label{fo_pb:thm-sg-sm-equ2p}
&n(\mathbb{E}[f(\bar{x}_{T+1})]-f^*)=\mathbb{E}[W_{4,T+1}]\nonumber\\
&\le W_{4,0}+\frac{1}{2}\eta L_f^2\sum_{k=0}^{T}\mathbb{E}[\|\bsx_k\|^2_{\bsK}]
+L_f\sigma^2\eta^2(T+1).
\end{align}
From \eqref{fo_pb:vkLya4.1}, \eqref{fo_pb:thm-sg-sm-equ2p}, $\eta=\kappa_2/\beta$, and $\beta=\kappa_2\sqrt{T}/\sqrt{n}$, then we have \eqref{fo_pb:coro-sg-sm-equ4}.

Similar to get~\eqref{fo_pb:coro-sg-sm-equ3.1}, taking expectation and summing \eqref{fo_pb:sgproof-vkLya2T_W4} over $ k\in[0,T]$  yield
\begin{align}\label{fo_pb:vkLya4.1-speed}
&\frac{1}{4}n\frac{\kappa_4}{L_f^{2}}\sum_{k=0}^{T}\mathbb{E}[\|\nabla f(\bar{x}_k)\|_{1+\gamma}^2]=\frac{1}{4}\frac{\kappa_4}{L_f^{2}}\sum_{k=0}^{T}\mathbb{E}[\|\bar{\bsg}^0_{k}\|_{1+\gamma}^2]
\nonumber\\
&\le W_{0}+\frac{\kappa_4}{L_f^{2}}W_{4, 0}+(T+1)\mathcal{O}(n)\sigma^2\eta^2
+(T+1)\frac{\kappa_4}{L_f^{2}}L_f\sigma^2\eta.
\end{align}
Consider that $W_{0}+\frac{\kappa_4}{L_f^{2}}W_{4, 0}=\mathcal{O}(n/\eta)$, $\eta=\kappa_2/\beta$, and $\beta=\kappa_2\sqrt{T}/\sqrt{n}$, then we have \eqref{fo_pb:coro-sg-sm-equ3}.
\end{proof}
\section{Numerical Experiments}\label{fo_pb:exp}
\subsection{Two-layer Fully Connected Neural Networks}\label{fo:exp:nn} 
\subsubsection{Experiments Description}
Any neural network that has 2 or more than 2 layers is a non-convex optimization problem \cite{pmlr-v38-choromanska15}.
In this case, we consider the training of neural networks (NN) for image classification tasks of subset of the database MNIST \cite{lecun2010mnist} with 10 agents generated randomly following the Erd\H{o}s - R\' enyi model with the connection probability of $0.4$ shown in Figure~\ref{fig:comm}. 
The subset of MNIST contains 5000 images of 10 digits(0-9), half of them are used for training and the rest of them are used for testing.
\begin{figure}[!ht]
\centering
  \includegraphics[width=0.45\textwidth]{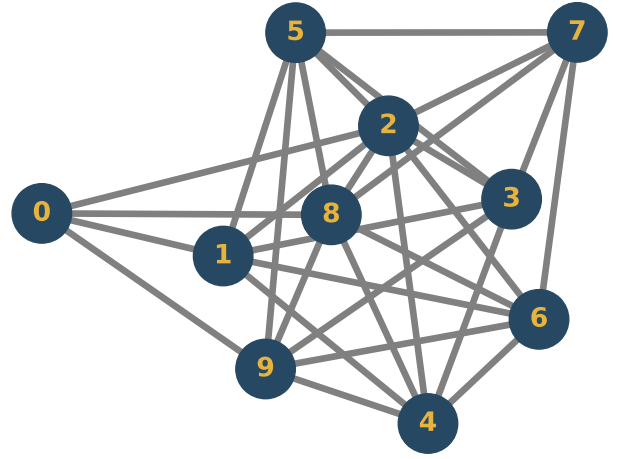}
  \caption{Communication topology of 10 nodes.}
  \label{fig:comm}
\end{figure}
For each agent, the training set has been divided evenly and is i.i.d, i.e. each agent has 250 samples. The neural network architecture consists of an input and output layer and 2 hidden layers.
Each local neural network architecture is shown in Fig.~\ref{fig:nn} in appendix~\ref{app:exp}.
Since the dimension of a single input image is $20\times 20 = 400$\footnote{In this experiment, we use a modified subset of the original MNIST dataset.}, the input layer contains $401$ neurons including a biased neuron.
Following the input layer, we construct a hidden layer with $51$ neurons including a biased neuron.
Followed by both the hidden layer and the output layer, we apply the logistic sigmoid function.
We demonstrate the result in terms of the empirical risk function \cite{bottou2012stochastic}, which is given as
\begin{align*}
R(\bm z) =& - \frac{1}{n}\sum_{i = 1}^{n}\frac{1}{m_{n}} \sum _{j=1}^{m_{n}} \sum _{k=0}^{9}\Big(t_{k} \ln y_k(\bm{x}, \bm z) \\ &+(1-t_{k})\ln (1 - y_k(\bm{x}, \bm z))\Big)
\end{align*}
where $m_{n}$ indicates the size of data set for each agent, $t_k$ denotes the target (ground truth) of digit $k$ corresponding to a single image, $\bm x$ is a single image input, $\bm z=(z^{(1)}, z^{(2)})$ where $z^{(1)}$ and $z^{(2)}$ are the weights for the 2 layers separately, and $y_k \in [0, 1]$ is the output which expresses the probability of digit $k = 0, \dots, 9$.
The mapping from input to output is given as:
\begin{align*}
y_k(\bm{x}, \bm z) = \text{Sig} \left( \sum_{j=0}^{50} z_{k, j}^{(2)} \text{Sig} \left( \sum_{i = 0}^{20 \times 20} z_{j, i}^{(1)}x_{i}\right)\right),
\end{align*}
where $\text{Sig} (s) = \frac{1}{1+\exp(-s)}$ is the logistic sigmoid function.
The parameters for the proposed and comparison algorithms are summarized in Table~\ref{tab:nn-par} in Appendix~\ref{app:exp}.
\subsubsection{Results}
We compare Algorithm~\ref{fopb:algorithm-random-pd} under time-varying parameters and fixed parameters with existing algorithms, such as DSGPA-T and DSGPA-F from \cite{yi2020primal}, D-SGD-1, the distributed SGD algorithm \cite{jiang2017collaborative,lian2017can}, D-SGD-2, the distributed SGD algorithm \cite{george2019distributed}, $\mathrm{D}^2$ \cite{Tang2018Decentralized}, DM-SGD the distributed momentum SGD algorithm \cite{Yu2019on}, D-SGT-1, the distributed stochastic gradient tracking algorithm   \cite{lu2019gnsd,xin2019distributed}, D-SGT-2, the distributed stochastic gradient tracking algorithm \cite{zhang2019decentralized,pu2018distributed}, and D-ASG, the distributed accelerated stochastic gradient methods \cite{fallah2019robust}.
In Figure~\ref{fig:risk}, we can see that DSGPA-T-PB has exactly the same convergence rate as the distributed momentum SGD algorithm \cite{Yu2019on} and D-ASG \cite{fallah2019robust}, which are accelerated algorithms.
Compared to the other existing non-accelerated distributed algorithms, DSGPA-T-PB converges faster.
In terms of testing accuracy shown in Table~\ref{tab:nn-acc}, the base line is set by centralized SGD algorithm, which is $93.00\%$.
All the distributed algorithms can achieve a good accuracy result ranging from $90.44\%$ ($\mathrm{D}^2$) to $93.44\%$ (the distributed momentum SGD algorithm)
DSGPA-F-PB has a better result than D-ASG but a slightly lower accuracy than the distributed momentum SGD algorithm by $0.733\%$.
In the experiment in Sec.~\ref{fo:exp:cnn}, we can see that DSGPA-F-PB outperforms much more than the distributed momentum SGD algorithm.
\begin{figure}[!ht]
\centering
  \includegraphics[width=0.45\textwidth]{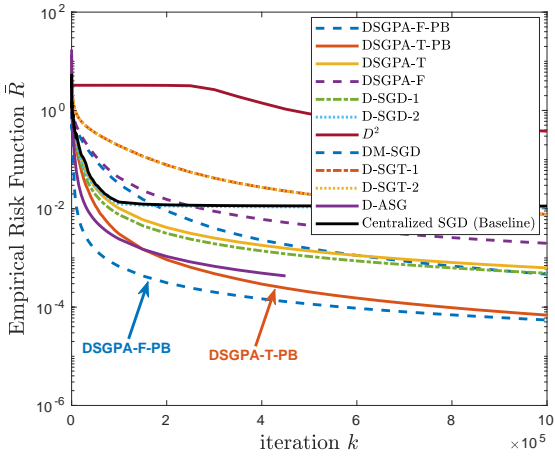}
  \caption{Training performance comparison.}
  \label{fig:risk}
\end{figure}
\begin{table}[!ht]
\caption{{Accuracy on each algorithm in NN experiment.} }
\label{tab:nn-acc}
\vskip 0.15in
\begin{center}
\begin{scriptsize}
\begin{tabular}{M{2.6cm}|M{1.2cm}}
\hline
Algorithm & Accuracy  \\
\hline
DSGPA-F-PB & $\mathbf{92.76\%}$\\
\hline
DSGPA-T-PB & $\mathbf{92.22\%}$\\
\hline
DSGPA-T & $93.04\%$\\
\hline
DSGPA-F & $92.76\%$\\
\hline
DM-SGD \cite{Yu2019on} & $93.44\%$\\
\hline
D-SGD-1 \cite{jiang2017collaborative,lian2017can}& $92.96\%$\\
\hline
D-SGD-2 \cite{george2019distributed} & $92.88\%$\\
\hline
$D^{2}$ \cite{Tang2018Decentralized}& $90.44\%$\\
\hline
D-SGT-1 \cite{lu2019gnsd,xin2019distributed} & $92.88\%$\\
\hline
D-SGT-2 \cite{zhang2019decentralized,pu2018distributed} & $92.96\%$\\
\hline
D-ASG \cite{fallah2019robust} & $90.68\%$\\
\hline
C-SGD & $93\%$\\
\hline
\end{tabular}
\end{scriptsize}
\end{center}
\vskip -0.1in
\end{table}
\subsection{Convolutional Neural Networks}\label{fo:exp:cnn}
\subsubsection{Experiments Description}
In this section, we consider a more complicated deep learning task of training a convolutional neural networks (CNN) model on the MNIST dataset.
We build a CNN model for each agent with five 3$\times$3 convolutional layers using ReLU as activation function, one average pooling layer with filters of size 2$\times$2, one sigmoid layer with dimension 360, another sigmoid layer with dimension 60, one softmax layer with dimension 10, shown in Fig.~\ref{fig:cnn} in appendix~\ref{app:exp}.
In this experiment, we use the entire MNIST data set. We use the same  communication graph as in the above NN  experiment. Each agent is assigned 6000 data points randomly. We set the batch size as 20, which means at each iteration, 20 data points are chosen by the agent to update the gradient, which also follows a uniform distribution. For each algorithm, we perform 10 epochs to train the CNN model.
We summarize the parameters for all compared algorithms in Table~\ref{tab:cnn-par} in Appendix~\ref{app:exp}.
\subsubsection{Results}
First, we compare Algorithm~\ref{fopb:algorithm-random-pd} under fix parameters with the fastest converging results from Sec.~\ref{fo:exp:nn}: the distributed momentum SGD algorithm \cite{Yu2019on}, D-ASG \cite{fallah2019robust}, DSGPA-T and DSGPA-F from \cite{yi2020primal}, distributed SGD algorithm \cite{jiang2017collaborative,lian2017can}, and C-SGD.
The training loss and testing accuracy are shown in Figure~\ref{fig:10_loss} and Figure~\ref{fig:10_acc} separately. Moreover, the testing accuracy is listed in Table~\ref{tab:cnn-acc}.
We can easily see that DSGPA-F-PB has the fastest convergence rate and highest accuracy among the all compared algorithms.
In Figure~\ref{fig:10_loss}, we can conclude that DSGPA-F-PB converges fast and has the lowest training loss after 10 epochs' training.
In Figure~\ref{fig:10_acc}, we can see that DSGPA-F-PB can achieve the highest testing accuracy $98.02\%$, which is also confirmed in Table~\ref{tab:cnn-acc}.
\begin{figure}[!ht]
\centering
        \includegraphics[width=0.45\textwidth]{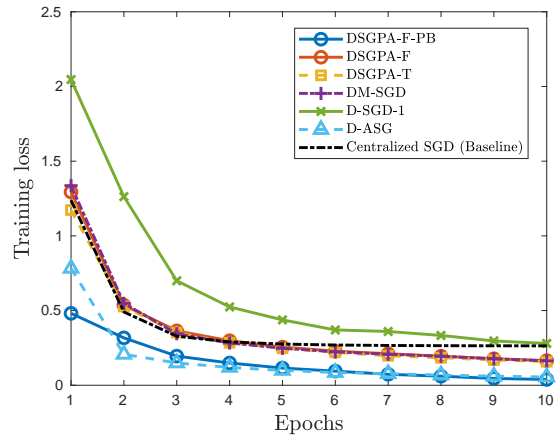}
        \caption{Training loss comparison.}
        \label{fig:10_loss}
\end{figure}

\begin{figure}[!ht]
\centering
        \includegraphics[width=0.45\textwidth]{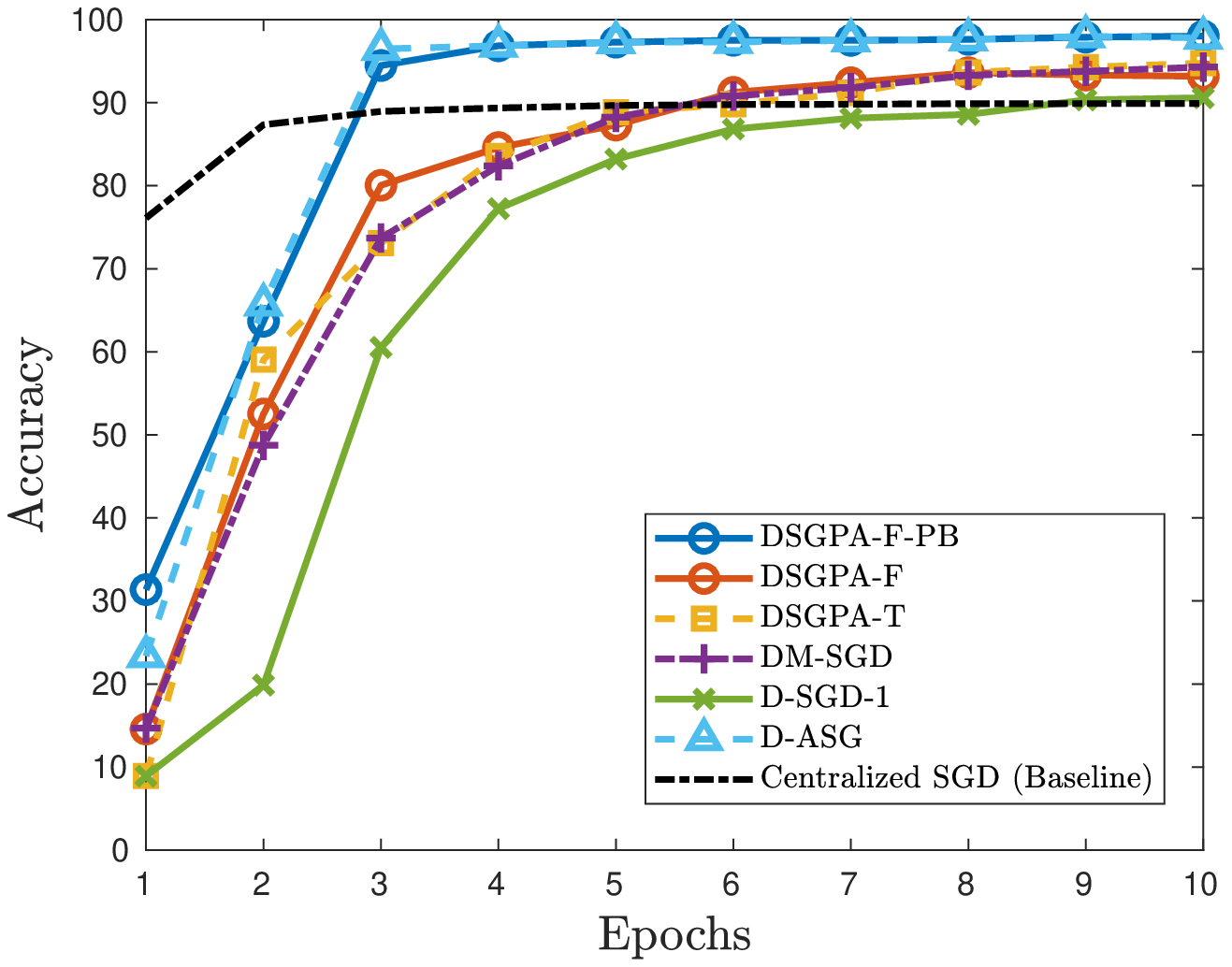}
        \caption{Testing accuracy comparison.}
        \label{fig:10_acc}
\end{figure}

\begin{table}[!ht]
\caption{{Accuracy on each algorithm in CNN experiment.} }
\label{tab:cnn-acc}
\vskip 0.15in
\begin{center}
\begin{scriptsize}
\begin{tabular}{M{2.1cm}|M{1.2cm}}
\hline
Algorithm & Accuracy  \\

\hline
DSGPA-F-PB & $\mathbf{98.02\%}$\\

\hline
DSGPA-T & $94.75\%$\\

\hline
DSGPA-F & $93.17\%$\\

\hline
DM-SGD \cite{Yu2019on} & $94.29\%$\\

\hline
D-ASG \cite{fallah2019robust} & $97.80\%$\\

\hline
D-SGD \cite{jiang2017collaborative,lian2017can}& $92.96\%$\\

\hline
C-SGD & $89.91\%$\\
\hline
\end{tabular}
\end{scriptsize}
\end{center}
\vskip -0.1in
\end{table}

\subsection{Robustness of $\gamma$}\label{fo_pb:exp:robust}
In order to show the effect of $\gamma$, we conduct another experiment based on the same CNN architecture but with 50 nodes.
The communication topology is shown in Figure~\ref{fig:comm_50}.
We tested several $\gamma \in \{0.3, 0.5, 0.7, 0.9, 1\}$. 
When $\gamma = 1$, the proposed algorithm is the same as DSGPA-F \cite{yi2020primal}, when $\gamma = 0$, the powerball gradient term $\sgn(\nabla)|\nabla|^{\gamma}$ becomes $\sgn (\nabla)$, which can be seen as a new distributed optimization algorithm based on \textit{signSGD}~\cite{pmlr-v80-bernstein18a}.
In this experiment, each agent is assigned $1,200$ samples, the network is trained for $20$ epochs, and the training loss and testing accuracy are shown in Figure~\ref{fig:gamma_50_node_loss} and Figure~\ref{fig:gamma_50_node_acc} respectively.
\begin{figure}[!ht]
\centering
  \includegraphics[width=0.45\textwidth]{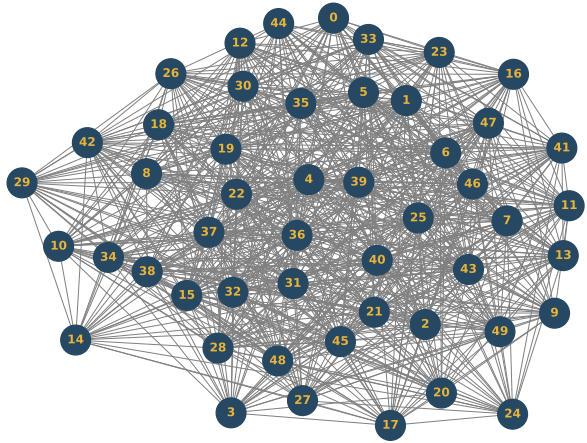}
  \caption{Communication topology of 50 nodes.}
  \label{fig:comm_50}
\end{figure}
\begin{figure}[!ht]
\centering
        \includegraphics[width=0.45\textwidth]{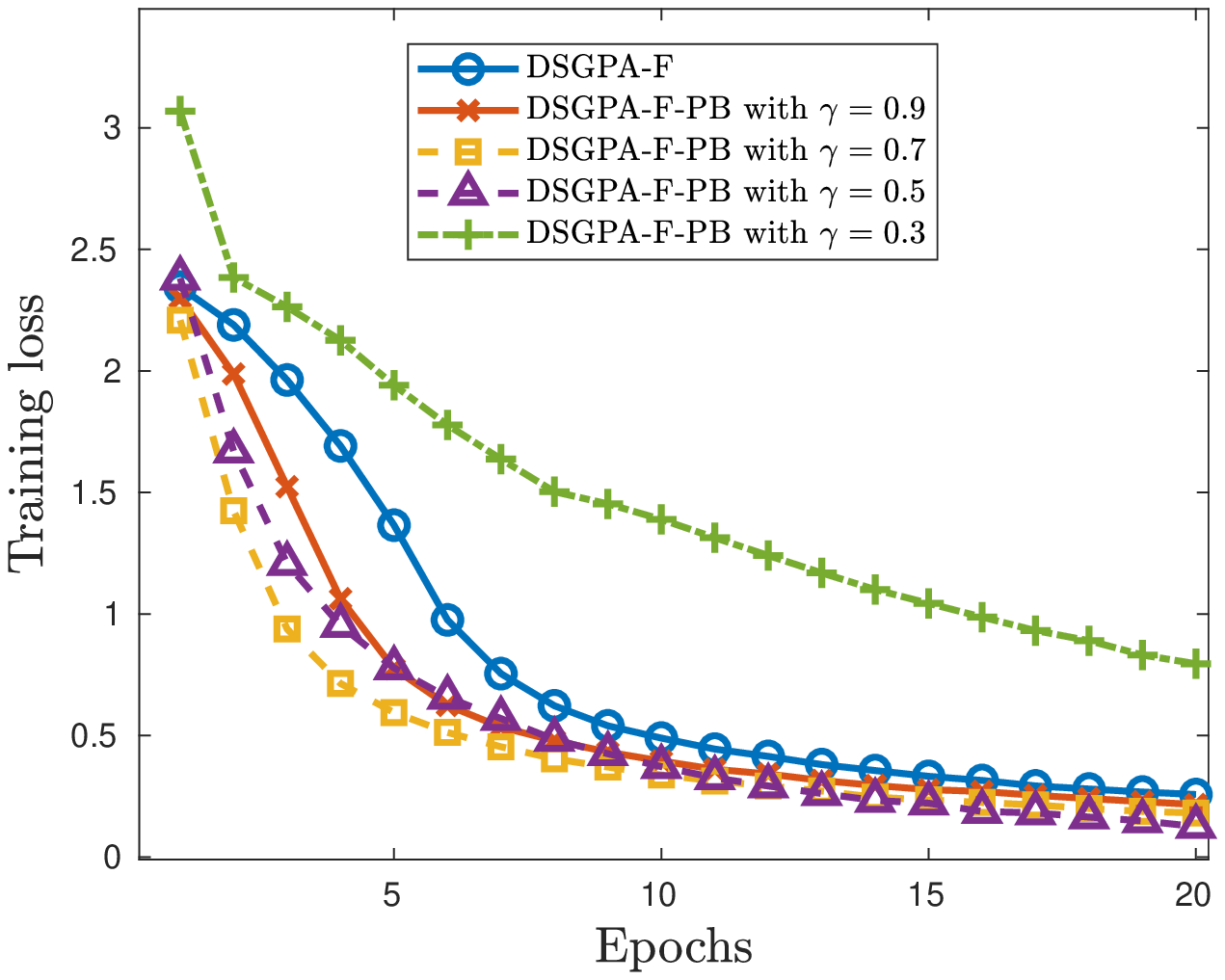}
        \caption{Training loss comparison.}
        \label{fig:gamma_50_node_loss}
\end{figure}

\begin{figure}[!ht]
\centering
        \includegraphics[width=0.45\textwidth]{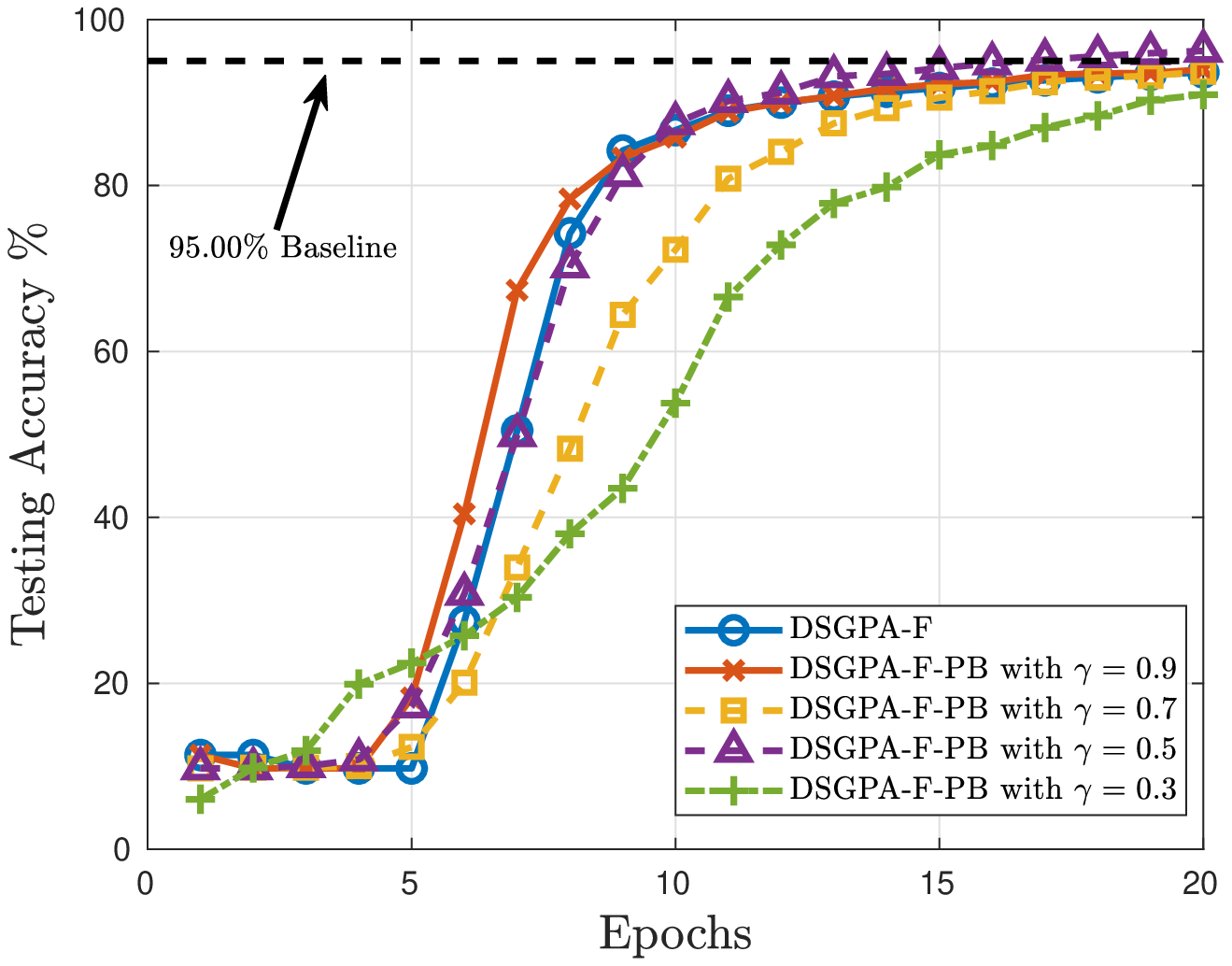}
        \caption{Testing accuracy comparison.}
        \label{fig:gamma_50_node_acc}
\end{figure}
From both experiments in Sec.~\ref{fo:exp:nn} and Sec.~\ref{fo:exp:cnn}, we can conclude that DSGPA-F-PB has the best performance overall in terms of the speed of convergence and testing accuracy, especially in the more complicated CNN case, which indicates that DSGPA-F-PB can be applied for training more complicated and complex CNN models in distributed and decentralized deep learning.
\section{Conclusion}\label{fo_pb:con}
In this paper, we examined how to accelerate the convergence of distributed optimization algorithms on general non-convex problems.
A distributed primal-dual stochastic gradient descent~(SGD) equipped with ``powerball'' method was proposed and proven to achieve the linear speedup convergence rate $\mathcal{O}(1/\sqrt{nT})$ for general smooth (possibly non-convex) cost functions.
We compared the proposed algorithm and state-of-the-art distributed SGD algorithms and centralized SGD algorithms in distributed machine learning experiments.
The experiments demonstrated the benefits of the proposed algorithm, which match the theoretical results and analysis.

\section*{Acknowledgments}
The authors would like to thank Dr. Xinlei Yi and Dr. Ye Yuan for their insightful inspirations and motivations on this work.

\bibliographystyle{IEEEtran}
\bibliography{fo_pb}
\begin{appendices}\label{sec:app}

\section{Proof of Lemma~\ref{fo_pb:lemma:lya}}\label{fo_pb:lyap_lemma}
\begin{proof}
Consider the following Lyapunov candidate function 
\begin{align}
W_{k}= &\underbrace{\frac{1}{2}\|\bsx_{k}\|^2_{\bsK}}_{W_{1, k}} + \underbrace{\frac{1}{2}\Big\|\bsv_k+\frac{1}{\beta}\bsg_k^0\Big\|^2_{\bsQ+\kappa_1\bsK}}_{W_{2, k}} \nonumber \\ &+ \underbrace{\bsx_k^\top\bsK\Big(\bm{v}_k+\frac{1}{\beta}\bsg_k^0\Big)}_{W_{3, k}} + \underbrace{n(f(\bar{x}_k)-f^*)}_{W_{4, k}}\label{fo_pb:lyap_def_k}
\end{align}
where $\bsQ=R\Lambda^{-1}_1R^{\top}\otimes {\bm I}_p$.
Additionally, we denote $\bsg_k=n \nabla f({\bsx}_{k})$, $\bar{\bsg}_k=\bsH\bsg_{k}$, and $\bar{\bsg}_k^u=\bsH\bsg_k^u$.

(i) We have
\begin{align}
&\mathbb{E}[W_{1,k+1}]
=\mathbb{E}\Big[\frac{1}{2}\|\bm{x}_{k+1} \|^2_{\bsK}\Big]\nonumber\\
&\overset{\mathrm{Eq.~\ref{fopb:alg:random-pd-x}}}{=\joinrel=}\mathbb{E}\Big[\frac{1}{2}\|\bm{x}_k-\eta(\alpha\bsL\bm{x}_k+\beta\bm{v}_k+\sigma(\bsg^u_k, \gamma)) \|^2_{\bsK}\Big]\nonumber\\
&\overset{\text{(a)}}{=}\mathbb{E}\Big[\frac{1}{2}\|\bm{x}_k\|^2_{\bsK}-\eta\alpha\|\bsx_k\|^2_{\bsL}
+\frac{1}{2}\eta^2\alpha^2\|\bsx_k\|^2_{\bsL^2}
\nonumber\\
&~~~-\eta\beta\bsx^\top_k({\bm I}_{np}-\eta\alpha\bsL)\bsK\Big(\bm{v}_k+\frac{1}{\beta}\sigma(\bsg^u_k, \gamma)\Big)\nonumber\\
&~~~+\frac{1}{2}\eta^2\beta^2\Big\|\bm{v}_k+\frac{1}{\beta}\sigma(\bsg^u_k, \gamma)\Big\|^2_{\bsK}\Big]\nonumber\\
&\overset{\text{(b)}}{=}W_{1,k}-\|\bsx_k\|^2_{\eta\alpha\bsL
-\frac{1}{2}\eta^2\alpha^2\bsL^2}\nonumber\\
&~~~-\eta\beta\bsx^\top_k({\bm I}_{np}-\eta\alpha\bsL)\bsK\Big(\bm{v}_k
+\frac{1}{\beta}\bsg_k\Big)\nonumber\\
&~~~+\frac{1}{2}\eta^2\beta^2\mathbb{E}\Big[\Big\|\bm{v}_k+\frac{1}{\beta}\bsg_k^0
+\frac{1}{\beta}\sigma(\bsg^u_k, \gamma)-\frac{1}{\beta}\bsg_k^0\Big\|^2_{\bsK}\Big]\nonumber\\
&\overset{\text{(c)}}{\le} W_{1,k}-\|\bsx_k\|^2_{\eta\alpha\bsL
-\frac{1}{2}\eta^2\alpha^2\bsL^2}\nonumber\\
&~~~-\eta\beta\bsx^\top_k\bsK\Big(\bm{v}_k+\frac{1}{\beta}\bsg_k^0\Big)\nonumber\\
&~~~+\frac{1}{2}\eta\|\bm{x}_k\|^2_{\bsK}
+\frac{1}{2}\eta\|\bsg_k-\bsg_k^0\|^2\nonumber\\
&~~~+\frac{1}{2}\eta^2\alpha^2\|\bm{x}_k\|^2_{\bsL^2}
+\frac{1}{2}\eta^2\beta^2\Big\|\bm{v}_k+\frac{1}{\beta}\bsg_k^0\Big\|^2_{\bsK}\nonumber\\
&~~~+\frac{1}{2}\eta^2\alpha^2\|\bm{x}_k\|^2_{\bsL^2}
+\frac{1}{2}\eta^2\|\bsg_k-\bsg_k^0\|^2\nonumber\\
&~~~+\eta^2\beta^2\Big\|\bm{v}_k+\frac{1}{\beta}\bsg_k^0\Big\|^2_{\bsK}
+\eta^2\mathbb{E}[\|\sigma(\bsg^u_k, \gamma)-\bsg_k^0\|^2]\nonumber\\
&\overset{\text{(d)}}{\le} W_{1,k}-\|\bsx_k\|^2_{\eta\alpha\bsL-\frac{1}{2}\eta\bsK
-\frac{3}{2}\eta^2\alpha^2\bsL^2-\eta(1+5\eta)L_f^2\bsK}
\nonumber\\
&~~~-\eta\beta\bsx^\top_k\bsK\Big(\bm{v}_k+\frac{1}{\beta}\bsg_k^0\Big)
+\Big\|\bm{v}_k+\frac{1}{\beta}\bsg_k^0\Big\|^2_{\frac{3}{2}\eta^2\beta^2\bsK}\nonumber\\
&~~~+2n\sigma^{2}\eta^{2},\label{fo_pb:v1k}
\end{align}
where (a) holds due to Lemma~1 and~2 in~\cite{Yi2018distributed}; (b) holds due to $\mathbb{E}[\bsg^u_k] = \bsg_k$ and that $x_{i,k}$ and $v_{i,k}$ are independent of $u_{i,k}$ and $\xi_{i,k}$; (c) holds due to the Cauchy--Schwarz inequality, $\rho(\bsK)=1$ and $1+\gamma \leq 2$; (d) holds due to $\|\bsg^0_{k}-\bsg_{k}\|^2\le L_f^2\|\bar{\bsx}_{k}-\bsx_{k}\|^2=L_f^2\|\bsx_{k}\|^2_{\bsK}$, $\mathbb{E}[\|\sigma(\bsg^u_k, \gamma)-\bsg_k\|^2]\le \mathbb{E}[\|\bsg^u_k-\bsg_k\|^2]\le n\sigma^2$.


(ii)
\begin{align}
W_{2,k+1}
=\frac{1}{2}\Big\|\bsv_{k+1}+\frac{1}{\beta}\bsg_{k+1}^0\Big\|^2_{\bsQ+\kappa_1\bsK}\label{fo_pb:v2k-1}
\end{align}
\begin{align}
&\overset{\mathrm{Eq.~\eqref{fopb:alg:random-pd-q}}}{=\joinrel=}\frac{1}{2}\Big\|\bm{v}_k+\frac{1}{\beta}\bsg_{k}^0+\eta\beta\bsL\bm{x}_k
+\frac{1}{\beta}(\bsg_{k+1}^0-\bsg_{k}^0) \Big\|^2_{\bsQ+\kappa_1\bsK}\nonumber\\
&\overset{\text{(e)}}{=}W_{2,k}+\eta\beta\bsx^\top_k(\bsK+\kappa_1\bsL)\Big(\bm{v}_k+\frac{1}{\beta}\bsg_k^0\Big)\nonumber\\
&~~~+\|\bsx_k\|^2_{\frac{1}{2}\eta^2\beta^2(\bsL+\kappa_1\bsL^2)}
+\frac{1}{2\beta^2}\Big\|\bsg_{k+1}^0-\bsg_{k}^0\Big\|^2_{\bsQ+\kappa_1\bsK}\nonumber\\
&~~~+\frac{1}{\beta}\Big(\bm{v}_k+\frac{1}{\beta}\bsg_{k}^0
+\eta\beta\bsL\bm{x}_k\Big)^\top(\bsQ
+\kappa_1\bsK)(\bsg_{k+1}^0-\bsg_{k}^0)\nonumber\\
&\overset{\text{(f)}}{\le} W_{2,k}+\eta\beta\bsx^\top_k(\bsK+\kappa_1\bsL)\Big(\bm{v}_k+\frac{1}{\beta}\bsg_k^0\Big)\nonumber\\
&~~~+\|\bsx_k\|^2_{\frac{1}{2}\eta^2\beta^2(\bsL+\kappa_1\bsL^2)}
+\frac{1}{2\beta^2}\|\bsg_{k+1}^0-\bsg_{k}^0\|^2_{\bsQ+\kappa_1\bsK}\nonumber\\
&~~~+\frac{\eta}{2}\Big\|\bm{v}_k+\frac{1}{\beta}\bsg_{k}^0\Big\|^2_{\bsQ+\kappa_1\bsK}
+\frac{1}{2\eta\beta^2}\|\bsg_{k+1}^0-\bsg_{k}^0\|^2_{\bsQ+\kappa_1\bsK}\nonumber\\
&~~~+\frac{1}{2}\eta^2\beta^2\|\bsL\bm{x}_k\|^2_{\bsQ+\kappa_1\bsK}
+\frac{1}{2\beta^2}\|\bsg_{k+1}^0-\bsg_{k}^0\|^2_{\bsQ+\kappa_1\bsK}\nonumber\\
&\overset{\text{(g)}}{=} W_{2,k}+\eta\beta\bsx^\top_k(\bsK+\kappa_1\bsL)\Big(\bm{v}_k+\frac{1}{\beta}\bsg_k^0\Big)\nonumber\\
&~~~+\|\bsx_k\|^2_{\eta^2\beta^2(\bsL+\kappa_1\bsL^2)}
+\Big\|\bm{v}_k+\frac{1}{\beta}\bsg_{k}^0
\Big\|^2_{\frac{1}{2}\eta(\bsQ+\kappa_1\bsK)}\nonumber\\
&~~~+\frac{1}{\beta^2}\Big(1+\frac{1}{2\eta}\Big)
\|\bsg_{k+1}^0-\bsg_{k}^0\|^2_{\bsQ+\kappa_1\bsK}\nonumber\\
&\overset{\text{(h)}}{\le} W_{2,k}+\eta\beta\bsx^\top_k(\bsK+\kappa_1\bsL)\Big(\bm{v}_k+\frac{1}{\beta}\bsg_k^0\Big)\nonumber\\
&~~~+\|\bsx_k\|^2_{\eta^2\beta^2(\bsL+\kappa_1\bsL^2)}
+\Big\|\bm{v}_k+\frac{1}{\beta}\bsg_{k}^0
\Big\|^2_{\frac{1}{2}\eta(\bsQ+\kappa_1\bsK)}\nonumber\\
&~~~+\frac{1}{\beta^2}\Big(1+\frac{1}{2\eta}\Big)\Big(\frac{1}{\rho_2(L)}+\kappa_1\Big)
\|\bsg_{k+1}^0-\bsg_{k}^0\|^2\nonumber\\
&\overset{\text{(i)}}{\le} W_{2,k}+\eta\beta\bsx^\top_k(\bsK+\kappa_1\bsL)\Big(\bm{v}_k+\frac{1}{\beta}\bsg_k^0\Big)\nonumber\\
&~~~+\|\bsx_k\|^2_{\eta^2\beta^2(\bsL+\kappa_1\bsL^2)}
+\Big\|\bm{v}_k+\frac{1}{\beta}\bsg_{k}^0
\Big\|^2_{\frac{1}{2}\eta(\bsQ+\kappa_1\bsK)}\nonumber\\
&~~~+\frac{\eta}{\beta^2}\Big(\eta+\frac{1}{2}\Big)
\Big(\frac{1}{\rho_2(L)}+\kappa_1\Big)L_f^2\|\bar{\bsg}^u_{k}\|^2,\label{fo_pb:v2k-2}
\end{align}
where (e) holds due to (a) holds due to Lemma~1 and~2 in~\cite{Yi2018distributed}; (f) holds due to the Cauchy--Schwarz inequality; (g) holds due to Lemma~1 and~2 in~\cite{Yi2018distributed}; (h) holds due to $\rho(\bsQ+\kappa_1\bsK)\le\rho(\bsQ)+\kappa_1\rho(\bsK)$, and $\rho(\bsK)=1$; (i) holds due to $\|\bsg^0_{k+1}-\bsg^0_{k}\|^2\le \eta^2L_f^2\|\bar{\bsg}^u_{k}\|^2
\le\eta^2L_f^2\|\bsg^u_{k}\|^2$.
Moreover, we have the following two inequalities hold:
\begin{equation}\label{fo_pb:v2k-3}
\|\bsg_{k+1}^0\|^2_{\bsQ+\kappa_1\bsK}
\le\Big(\frac{1}{\rho_2(L)}+\kappa_1\Big)\|\bsg_{k+1}^0\|^2.
\end{equation}
\begin{align}\label{fo_pb:v2k-4}
\Big\|\bm{v}_k+\frac{1}{\beta_k}\bsg_{k}^0\Big\|^2_{\bsQ+\kappa_1\bsK}
\le\Big(\frac{1}{\rho_2(L)}+\kappa_1\Big)\Big\|\bm{v}_k+\frac{1}{\beta_k}\bsg_{k}^0
\Big\|^2_{\bsK}.
\end{align}
Then, from \eqref{fo_pb:v2k-1}--\eqref{fo_pb:v2k-4}, we have
\begin{align}
&W_{2,k+1}\nonumber\\
&\le W_{2,k}
+\eta\beta\bsx^\top_k(\bsK+\kappa_1\bsL)\Big(\bm{v}_k+\frac{1}{\beta}\bsg_k^0\Big)\nonumber\\
&~~~+\frac{1}{2}\eta\Big(\frac{1}{\rho_2(L)}+\kappa_1\Big)
\Big\|\bm{v}_k+\frac{1}{\beta}\bsg_{k}^0\Big\|^2_{\bsK}\nonumber\\
&~~~+\|\bsx_k\|^2_{\eta^2\beta^2(\bsL+\kappa_1\bsL^2)}\nonumber\\
&~~~+\frac{\eta}{\beta^2}\Big(\eta+\frac{1}{2}\Big)
\Big(\frac{1}{\rho_2(L)}+\kappa_1\Big)L_f^2\|\bar{\bsg}^u_{k}\|^2.
\label{fo_pb:v2k}
\end{align}

(iii) We have
\begin{align}
W_{3,k+1}=\bsx_{k+1}^\top\bsK\Big(\bm{v}_{k+1}+\frac{1}{\beta}\bsg_{k+1}^0\Big).\label{fo_pb:v3k-1}
\end{align}
\begin{align}
&\mathbb{E}\Big[W_{3,k+1}\Big]\nonumber\\
&\overset{\mathrm{Eq.~\eqref{fopb:alg:random-pd}}}{=\joinrel=}\mathbb{E}\Big[(\bm{x}_k-\eta(\alpha\bsL\bm{x}_k+\beta\bm{v}_k
+\bsg_k^0+\sigma(\bsg^u_k, \gamma)-\bsg_k^0))^\top
\nonumber\\
&~~~\bsK\Big(\bm{v}_k+\frac{1}{\beta}\bsg_{k}^0+\eta\beta\bsL\bm{x}_k
+\frac{1}{\beta}(\bsg_{k+1}^0-\bsg_{k}^0)\Big)\Big]\nonumber\\
&\overset{\text{(j)}}{=}\bm{x}_k^\top(\bsK-\eta(\alpha+\eta\beta^2)\bsL)\Big(\bm{v}_k+\frac{1}{\beta}\bsg_{k}^0\Big)+\|\bm{x}_k\|^2_{\eta\beta(\bsL-\eta\alpha\bsL^2)}\nonumber\\
&~~~+\frac{1}{\beta}\bm{x}_k^\top(\bsK-\eta\alpha\bsL)
\mathbb{E}[\bsg_{k+1}^0-\bsg_{k}^0]-\eta\beta\Big\|\bm{v}_k+\frac{1}{\beta}\bsg_{k}^0\Big\|^2_{\bsK}\nonumber\\
&~~~-\eta\Big(\bm{v}_k+\frac{1}{\beta}\bsg_{k}^0\Big)^\top\bsK
\mathbb{E}[\bsg_{k+1}^0-\bsg_{k}^0]\nonumber\\
&~~~-\eta(\bsg_k-\bsg_k^0)^\top
\bsK\Big(\bm{v}_k+\frac{1}{\beta}\bsg_{k}^0+\eta\beta\bsL\bm{x}_k\Big)\nonumber\\
&~~~-\frac{1}{\beta}\mathbb{E}[\eta(\sigma(\bsg^u_k, \gamma)-\bsg_k^0)^\top
\bsK(\bsg_{k+1}^0-\bsg_{k}^0)]\nonumber\\
&\overset{\text{(k)}}{\le}\bm{x}_k^\top(\bsK-\eta\alpha\bsL)\Big(\bm{v}_k+\frac{1}{\beta}\bsg_{k}^0\Big)
+\frac{1}{2}\eta^2\beta^2\|\bsL\bsx_k\|^2\nonumber\\
&~~~+\frac{1}{2}\eta^2\beta^2\Big\|\bm{v}_k+\frac{1}{\beta}\bsg_{k}^0\Big\|^2_{\bsK}
+\|\bm{x}_k\|^2_{\eta\beta(\bsL-\eta\alpha\bsL^2)}\nonumber\\
&~~~+\frac{1}{2}\eta\|\bm{x}_k\|^2_\bsK
+\frac{1}{2\eta\beta^2}\mathbb{E}[\|\bsg_{k+1}^0-\bsg_{k}^0\|^2\nonumber\\
&~~~+\frac{1}{2}\eta^2\alpha^2\|\bsL\bm{x}_k\|^2
+\frac{1}{2\beta^2}\mathbb{E}[\|\bsg_{k+1}^0-\bsg_{k}^0\|^2]\nonumber\\
&~~~
-\eta\beta\Big\|\bm{v}_k+\frac{1}{\beta}\bsg_{k}^0\Big\|^2_{\bsK}\nonumber\\
&~~~+\frac{1}{2}\eta^2\beta^2\Big\|\bm{v}_k+\frac{1}{\beta}\bsg_{k}^0\Big\|^2_{\bsK}
+\frac{1}{2\beta^2}\mathbb{E}[\|\bsg_{k+1}^0-\bsg_{k}^0\|^2]\nonumber\\
&~~~+\frac{1}{2}\eta\|\bsg_k-\bsg_k^0\|^2
+\frac{1}{2}\eta\Big\|\bm{v}_k+\frac{1}{\beta}\bsg_{k}^0\Big\|^2_{\bsK}\nonumber\\
&~~~
+\frac{1}{2}\eta^2\|\bsg_k-\bsg_k^0\|^2
+\frac{1}{2}\eta^2\beta^2\|\bsL\bm{x}_k\|^2\nonumber\\
&~~~
+\frac{1}{2}\eta^2\mathbb{E}[\|\sigma(\bsg^u_k, \gamma)-\bsg_k^0\|^2]
+\frac{1}{2\beta^2}\mathbb{E}[\|\bsg_{k+1}^0-\bsg_{k}^0\|^2]\nonumber\\
&\overset{\text{(l)}}{\le} W_{3,k}
-\eta\alpha\bm{x}_k^\top\bsL\Big(\bm{v}_k+\frac{1}{\beta}\bsg_{k}^0\Big)\nonumber\\
&~~~+\|\bm{x}_k\|^2_{\eta(\beta\bsL+\frac{1}{2}\bsK)
+\eta^2(\frac{1}{2}\alpha^2-\alpha\beta+\beta^2)\bsL^2
+\eta(1+3\eta)L_f^2\bsK}\nonumber\\
&~~~+\eta^2\Big[1 + (\frac{1}{2\eta\beta^2}+\frac{3}{2\beta^2}\Big)L_f^2\Big]\mathbb{E}[\|\sigma(\bsg^u_k, \gamma)\|^2]\nonumber\\
&~~~+n\sigma^2\eta-\Big\|\bm{v}_k+\frac{1}{\beta}\bsg_{k}^0\Big\|^2_{\eta(\beta-\frac{1}{2}
-\eta\beta^2)\bsK}.
\label{fo_pb:v3k-2}
\end{align}
where (j) holds since $K_nL=LK_n=L$, $\mathbb{E}[\bsg^e_k] = \bsg^s_k$, and that $x_{i,k}$ and $v_{i,k}$ are independent; (k) holds due to the Cauchy--Schwarz inequality, the Jensen's inequality, and $\rho(\bsK)=1$; (l) holds due to holds due to $\|\bsg^0_{k}-\bsg_{k}\|^2\le L_f^2\|\bar{\bsx}_{k}-\bsx_{k}\|^2=L_f^2\|\bsx_{k}\|^2_{\bsK}$, $\mathbb{E}[\|\sigma(\bsg^u_k, \gamma)-\bsg_k\|^2]\le \mathbb{E}[\|\bsg^u_k-\bsg_k\|^2]\le n\sigma^2$, and $\|\bsg^0_{k+1}-\bsg^0_{k}\|^2\le \eta^2L_f^2\|\bar{\bsg}^u_{k}\|^2
\le\eta^2L_f^2\|\bsg^u_{k}\|^2$.

(iv) We have
\begin{align}
&\mathbb{E}[W_{4,k+1}]=\mathbb{E}[n(f(\bar{x}_{k+1})-f^*)]
=\mathbb{E}[\sum_{i=1}^{n}f_{i}(\bar{\bsx}_{k+1})-nf^*]\nonumber\\
&=\mathbb{E}[\sum_{i=1}^{n}f_{i}(\bar{\bsx}_k)-nf^*+\sum_{i=1}^{n}f_{i}(\bar{\bsx}_{k+1})
-\sum_{i=1}^{n}f_{i}(\bar{\bsx}_k)]\nonumber\\
&\overset{\text{(m)}}{\le}\mathbb{E}[\sum_{i=1}^{n}f_{i}(\bar{\bsx}_k)-nf^*
-\eta(\bar{\bsg}_{k}^u)^\top\bsg^0_k
+\frac{1}{2}\eta^2L_f\|\bar{\bsg}_{k}^u\|^2]\nonumber\\
&\overset{\text{(n)}}{=}W_{4,k}
-\eta(\bar{\bsg}_{k})^\top\bar{\bsg}^0_k
+\frac{1}{2}\eta^2L_f\mathbb{E}[\|\bar{\bsg}_{k}^u\|^2]\nonumber\\
&\overset{\text{(o)}}{=}W_{4,k}
-\frac{1}{2}\eta(\bar{\bsg}_{k})^\top(\bar{\bsg}_k+\bar{\bsg}^0_k-\bar{\bsg}_k)\nonumber\\
&~~~-\frac{1}{2}\eta(\bar{\bsg}_{k}-\bar{\bsg}^0_k+\bar{\bsg}^0_k)^\top\bar{\bsg}^0_k
+\frac{1}{2}\eta^2L_f\mathbb{E}[\|\bar{\bsg}_{k}^u\|^2]\nonumber\\
&\overset{\text{(p)}}{\le} W_{4,k}-\frac{1}{4}\eta(\|\bar{\bsg}_{k}\|^2
-\|\bar{\bsg}^0_k-\bar{\bsg}_k\|^2+\|\bar{\bsg}_{k}^0\|^2\nonumber\\
&~~~-\|\bar{\bsg}^0_k-\bar{\bsg}_k\|^2)
+\frac{1}{2}\eta^2L_f\mathbb{E}[\|\bar{\bsg}_{k}^u\|^2]\nonumber\\
&\overset{\text{(q)}}{\le} W_{4,k}-\frac{1}{4}\eta\|\bar{\bsg}_{k}\|^2
+\|\bsx_k\|^2_{\eta L_f^2\bsK}\nonumber\\
&~~~-\frac{1}{4}\eta\|\bar{\bsg}_{k}^0\|^2
+\frac{1}{2}\eta^2L_f\mathbb{E}[\|\bar{\bsg}^u_{k}\|^2],\label{fo_pb:v4k}
\end{align}
where (m) holds since that $\sum_{i=1}^{n}f_{i}$ is smooth under Assumption~\ref{fo_pb:ass:local}; (n) holds due to $\mathbb{E}[\bsg^e_k] = \bsg^s_k$, $x_{i,k}$ and $v_{i,k}$ are independent; (o) holds due to $(\bar{\bsg}_{k}^s)^\top\bsg^0_k=(\bsg_{k}^s)^\top\bsH\bsg^0_k=(\bsg_{k}^s)^\top\bsH\bsH\bsg^0_k
=(\bar{\bsg}_{k}^s)^\top\bar{\bsg}^0_k$; (p) holds due to the Cauchy--Schwarz inequality; and (q) holds due to  $\|\bar{\bsg}^0_k-\bar{\bsg}_k\|^2=\|\bsH(\bsg^0_{k}-\bsg_{k})\|^2\le\|\bsg^0_{k}-\bsg_{k}\|^2\le L_f^2\|\bsx_k\|^2_{\bsK}$, and $\mathbb{E}[\bar{\bsg}^u_k]=\mathbb{E}[\bsH\bsg^u_k]
=\bsH\mathbb{E}[\bsg^u_k]=\bar{\bsg}_k$.

Additionally, we have the following expression,
\begin{align}
&\mathbb{E}[\|\bar{\bsg}^u_k\|^2]
=\mathbb{E}[\|\bar{\bsg}^u_k-\bar{\bsg}_k+\bar{\bsg}_k\|^2]\nonumber\\
&\overset{\text{(r)}}{\le}2\mathbb{E}[\|\bar{\bsg}^u_k-\bar{\bsg}_k\|^2]+2\|\bar{\bsg}_k\|^2\nonumber\\
&=2n\mathbb{E}[\|\frac{1}{n}\sum_{i=1}^{n}(g^u_{i,k}-g_{i,k})\|^2]
+2\|\bar{\bsg}_k\|^2\nonumber\\
&=\frac{2}{n}\mathbb{E}[\|\sum_{i=1}^{n}(g^u_{i,k}-g_{i,k})\|^2]
+2\|\bar{\bsg}_k\|^2\nonumber\\
&=\frac{2}{n}\sum_{i=1}^{n}\mathbb{E}[\|g^u_{i,k}-g_{i,k}\|^2]
+2\|\bar{\bsg}_k\|^2\nonumber\\
&\overset{\text{(s)}}{\le}2\sigma^2+2\|\bar{\bsg}_k\|^2,\label{fo_pb:vkLya-3}
\end{align}
where (r) holds due to the Cauchy--Schwarz inequality; the last equality holds since $\{g^u_{i,k},~i\in[n]\}$ are independent of each other as assumed in Assumption~\ref{fo_pb:ass:stochastic-grad:xi}, $\bsx_{k}$ and $\bsv_{k}$ are independent, and $\mathbb{E}[g^u_{i,k}]=g_{i,k}$ as assumed in Assumption~\ref{fo_pb:ass:stochastic-grad:mean}; (s) holds due to $\mathbb{E}[\|\bsg^u_k-\bsg_k\|^2]\le n\sigma^2$.
From~\eqref{fo_pb:v4k} and~\eqref{fo_pb:vkLya-3}, we have 
\begin{align}
&\mathbb{E}[W_{4,k+1}] \le W_{4,k} + \|\bsx_k\|^2_{\frac{1}{2}\eta L_f^2\bsK} + L_f^{2}\sigma^{2}\eta^{2},
\end{align}
which is~\eqref{fo_pb:sgproof-vkLya2T_W4_og}.

(v) 
Recall the definition of $W_{k}$ in~\eqref{fo_pb:lyap_def_k} and combine~\eqref{fo_pb:v1k},~\eqref{fo_pb:v2k},~\eqref{fo_pb:v3k-2},~\eqref{fo_pb:vkLya-3}, then we have the following inequality directly,
\begin{align}
&\mathbb{E}[W_{k+1}]\nonumber\\
&\le W_{k}-\|\bsx_k\|^2_{\eta\alpha\bsL-\frac{1}{2}\eta\bsK
-\frac{3}{2}\eta^2\alpha^2\bsL^2-\eta(1+5\eta)L_f^2\bsK}
\nonumber\\
&~~~-\eta\beta\bsx^\top_k\bsK\Big(\bm{v}_k+\frac{1}{\beta}\bsg_k^0\Big)
+\Big\|\bm{v}_k+\frac{1}{\beta}\bsg_k^0\Big\|^2_{\frac{3}{2}\eta^2\beta^2\bsK}\nonumber\\
&~~~+2n\sigma^{2}\eta^{2}+\eta\beta\bsx^\top_k(\bsK+\kappa_1\bsL)\Big(\bm{v}_k+\frac{1}{\beta}\bsg_k^0\Big)\nonumber\\
&~~~+\frac{1}{2}\eta\Big(\frac{1}{\rho_2(L)}+\kappa_1\Big)
\Big\|\bm{v}_k+\frac{1}{\beta}\bsg_{k}^0\Big\|^2_{\bsK}\nonumber\\
&~~~+\|\bsx_k\|^2_{\eta^2\beta^2(\bsL+\kappa_1\bsL^2)}\nonumber\\
&~~~+\frac{\eta}{\beta^2}\Big(\eta+\frac{1}{2}\Big)
\Big(\frac{1}{\rho_2(L)}+\kappa_1\Big)L_f^2\|\bar{\bsg}^u_{k}\|^2\nonumber\\
&~~~-\eta\alpha\bm{x}_k^\top\bsL\Big(\bm{v}_k+\frac{1}{\beta}\bsg_{k}^0\Big)\nonumber\\
&~~~+\|\bm{x}_k\|^2_{\eta(\beta\bsL+\frac{1}{2}\bsK)
+\eta^2(\frac{1}{2}\alpha^2-\alpha\beta+\beta^2)\bsL^2
+\eta(1+3\eta)L_f^2\bsK}\nonumber\\
&~~~+\eta^2\Big[1 + (\frac{1}{2\eta\beta^2}+\frac{3}{2\beta^2}\Big)L_f^2\Big]\mathbb{E}[\|\sigma(\bsg^u_k, \gamma)\|^2]\nonumber\\
&~~~+n\sigma^2\eta-\Big\|\bm{v}_k+\frac{1}{\beta}\bsg_{k}^0\Big\|^2_{\eta(\beta-\frac{1}{2}
-\eta\beta^2)\bsK}\nonumber\\
&\overset{\text{(t)}}{\le} W_{k}-\|\bsx_k\|^2_{\eta\bsM_{1}-\eta^2\bsM_{2}}
-\Big\|\bm{v}_k+\frac{1}{\beta}\bsg_{k}^0\Big\|^2_{b_{1}\bsK}\nonumber\\
&~~~-b_{2}\eta \|\bar{\bsg}_{k}\|^2
-\frac{1}{4}\eta\|\bar{\bsg}^0_{k}\|_{1+\gamma}^2+b_{3}\sigma^2\eta^2+3n\sigma^2\eta^2,
\label{fo_pb:vkLya}
\end{align}
where (t) holds due to Lemma~\ref{fo_pb:lemma:pb}, $\alpha=\kappa_1\beta$,  $\eta=\frac{\kappa_2}{\beta}$, and
\begin{align*}
\bsM_{1}&=(\alpha-\beta)\bsL-\frac{1}{2}(2+3L_f^2)\bsK,\\
\bsM_{2}&=\beta^2\bsL+(2\alpha^2+\beta^2)\bsL^2+4L_f^2\bsK,\\
\kappa_3&= \frac{1}{\rho_2(L)} + \kappa_1 + 1,\\
b_{1}&=\frac{1}{2}(2\beta-\kappa_3)\eta
-\frac{5}{2}\beta^2\eta^2,\\
b_{2}&=\frac{1}{4}-b_{3}\eta,\\
b_{3}&=L_f+\frac{1}{\beta^2\eta}\kappa_3L_f^2+\frac{2}{\beta^2}(\kappa_3+1)L_f^2.		
\end{align*}
Consider $p\geq 1$, $\alpha=\kappa_1\beta$, $\kappa_1>1$, $\beta$ is large enough, and $\eta=\frac{\kappa_2}{\beta}$, we have
\begin{align}
\eta\bsM_{1}&\ge [(\kappa_1 - 1)\rho_2(L) - 1]\kappa_2\bsK.\label{fo_pb:m1-rand-pd}\\
\eta^2\bsM_{2}&\le [\rho(L)+(2\kappa_1^2+1)\rho(L^2)+1]\kappa_2^2\bsK.\label{fo_pb:m2-rand-pd}\\
b^0_{2}&\ge\frac{1}{2}(\kappa_2 - 5\kappa_2^2).\label{fo_pb:vkLya-b1}
\end{align}
From \eqref{fo_pb:vkLya}--\eqref{fo_pb:vkLya-b1}, let $\kappa_4 = [(\kappa_1 - 1)\rho_2(L) - 1]\kappa_2- [\rho(L)+(2\kappa_1^2+1)\rho(L^2)+1]\kappa_2^2 $ we know that \eqref{fo_pb:sgproof-vkLya2T} holds.
Similar to the way to get \eqref{fo_pb:sgproof-vkLya2T}, we have \eqref{fo_pb:sgproof-vkLya2T_W4}.
\end{proof}

\onecolumn
\section{Networks Architectures and experiment parameters}
\label{app:exp}
\begin{figure*}[!ht]
	\centering
	\begin{tikzpicture}[shorten >=1pt]
		\tikzstyle{unit}=[draw,shape=circle,minimum size=1.15cm]
		\tikzstyle{hidden}=[draw,shape=circle,minimum size=1.15cm]
 
		\node[unit](x0) at (0,3.5){$x_0$};
		\node[unit](x1) at (0,2){$x_1$};
		\node at (0,1){\vdots};
		\node[unit](xd) at (0,0){$x_{400}$};
 
		\node[hidden](h10) at (3,4){$z_0^{(1)}$};
		\node[hidden](h11) at (3,2.5){$z_1^{(1)}$};
		\node at (3,1.5){\vdots};
		\node[hidden](h1m) at (3,-0.5){$z_{50}^{(1)}$};
 
		\node(h22) at (5,0){};
		\node(h21) at (5,2){};
		\node(h20) at (5,4){};
		
		\node(d3) at (6,0){$\ldots$};
		\node(d2) at (6,2){$\ldots$};
		\node(d1) at (6,4){$\ldots$};
 
		\node(hL12) at (7,0){};
		\node(hL11) at (7,2){};
		\node(hL10) at (7,4){};
		
		\node[hidden](hL0) at (9,4){$z_0^{(2)}$};
		\node[hidden](hL1) at (9,2.5){$z_1^{(2)}$};
		\node at (9,1.5){\vdots};
		\node[hidden](hLm) at (9,-0.5){$z_{10}^{(2)}$};
 
		\node[unit](y1) at (12,3.5){$y_1$};
		\node[unit](y2) at (12,2){$y_2$};
		\node at (12,1){\vdots};	
		\node[unit](yc) at (12,0){$y_{10}$};
 
		\draw[->] (x0) -- (h11);
		\draw[->] (x0) -- (h1m);
 
		\draw[->] (x1) -- (h11);
		\draw[->] (x1) -- (h1m);
 
		\draw[->] (xd) -- (h11);
		\draw[->] (xd) -- (h1m);
 
		\draw[->] (hL0) -- (y1);
		\draw[->] (hL0) -- (yc);
		\draw[->] (hL0) -- (y2);
 
		\draw[->] (hL1) -- (y1);
		\draw[->] (hL1) -- (yc);
		\draw[->] (hL1) -- (y2);
 
		\draw[->] (hLm) -- (y1);
		\draw[->] (hLm) -- (y2);
		\draw[->] (hLm) -- (yc);
 
		\draw[->,path fading=east] (h10) -- (h21);
		\draw[->,path fading=east] (h10) -- (h22);
		
		\draw[->,path fading=east] (h11) -- (h21);
		\draw[->,path fading=east] (h11) -- (h22);
		
		\draw[->,path fading=east] (h1m) -- (h21);
		\draw[->,path fading=east] (h1m) -- (h22);
		
		\draw[->,path fading=west] (hL10) -- (hL1);
		\draw[->,path fading=west] (hL11) -- (hL1);
		\draw[->,path fading=west] (hL12) -- (hL1);
		
		\draw[->,path fading=west] (hL10) -- (hLm);
		\draw[->,path fading=west] (hL11) -- (hLm);
		\draw[->,path fading=west] (hL12) -- (hLm);
		
		\draw [decorate,decoration={brace,amplitude=10pt},xshift=-4pt,yshift=0pt] (-0.5,4) -- (0.75,4) node [black,midway,yshift=+0.6cm]{input layer};
		\draw [decorate,decoration={brace,amplitude=10pt},xshift=-4pt,yshift=0pt] (2.5,4.5) -- (3.75,4.5) node [black,midway,yshift=+0.6cm]{$1^{\text{st}}$ hidden layer};
		\draw [decorate,decoration={brace,amplitude=10pt},xshift=-4pt,yshift=0pt] (8.5,4.5) -- (9.75,4.5) node [black,midway,yshift=+0.6cm]{$2^{\text{nd}}$ hidden layer};
		\draw [decorate,decoration={brace,amplitude=10pt},xshift=-4pt,yshift=0pt] (11.5,4) -- (12.75,4) node [black,midway,yshift=+0.6cm]{output layer};
	\end{tikzpicture}
	\caption{Neural Network Architecture.}
	\label{fig:nn}
\end{figure*}
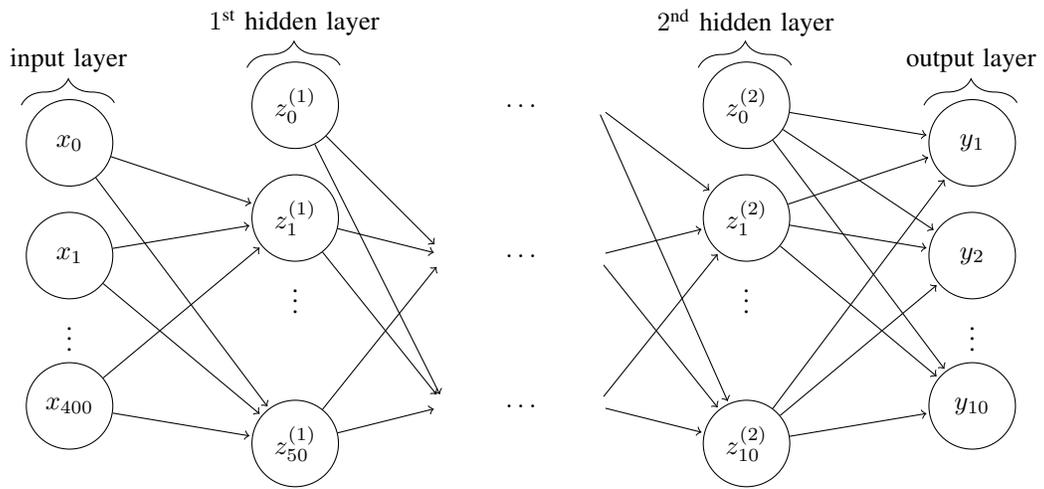

\begin{figure*}[!ht]
\centering
  \includegraphics[width=1\textwidth]{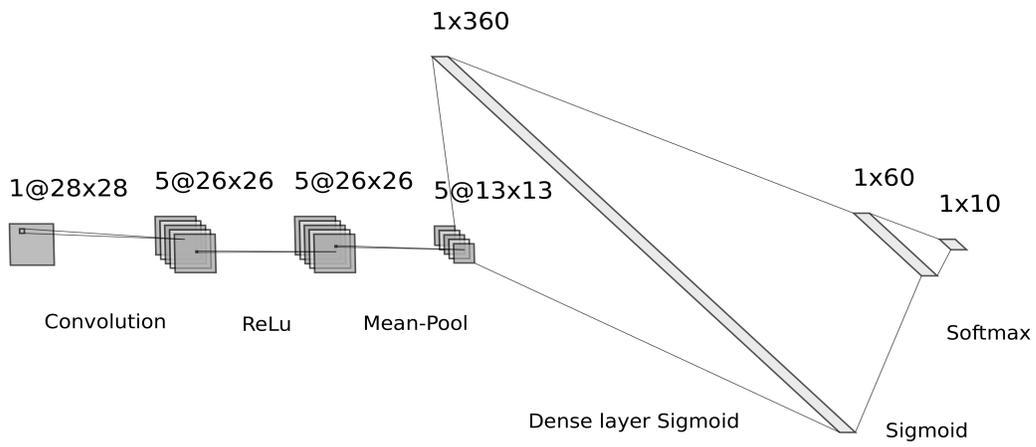}
  \caption{CNN Architecture.}
  \label{fig:cnn}
\end{figure*}

\begin{table*}[!ht]
\caption{{Parameters for each algorithm in NN experiment.} }
\label{tab:nn-par}
\vskip 0.15in
\begin{center}
\begin{scriptsize}
\begin{tabular}{M{2.1cm}|M{1.2cm}|M{2.0cm}|M{2.5cm} |M{2.5cm}}
\hline
Algorithm & $\eta$ & $\alpha$ & $\beta$ & $\gamma$ \\
\hline
DSGPA-T-PB & $0.08/{k^{10^{-5}}}$ & $4k^{10^{-5}}$ & $3k^{10^{-5}}$ & 0.7\\
\hline
DSGPA-F-PB &0.03 & 5 & 20 & 0.7\\
\hline
DSGPA-T & $0.08/{k^{10^{-5}}}$ & $4k^{10^{-5}}$ & $3k^{10^{-5}}$ & \ding{55}\\
\hline
DSGPA-F &0.03 & 5 & 20 & \ding{55}\\
\hline
DM-SGD \cite{Yu2019on} & 0.1 & \ding{55} &0.8 & \ding{55}\\
\hline
D-SGD-1 \cite{jiang2017collaborative,lian2017can}& 0.1 &\ding{55} &\ding{55}& \ding{55}\\
\hline
D-SGD-2 \cite{george2019distributed} &\ding{55} & $0.1/(10^{-5}k + 1)$&$0.2/(10^{-5}k + 1)^{0.3}$& \ding{55}\\
\hline
$D^{2}$ \cite{Tang2018Decentralized}& 0.01 &\ding{55} &\ding{55}& \ding{55}\\
\hline
D-SGT-1 \cite{lu2019gnsd,xin2019distributed} & 0.01& \ding{55}&\ding{55}& \ding{55}\\
\hline
D-SGT-2 \cite{zhang2019decentralized,pu2018distributed} & 0.01 & \ding{55}&\ding{55}& \ding{55}\\
\hline
C-SGD & 0.1 &\ding{55} &\ding{55}& \ding{55}\\
\hline
\end{tabular}
\end{scriptsize}
\end{center}
\vskip -0.1in
\end{table*}
\begin{table*}[!ht]
\caption{{Parameters for each algorithm in CNN experiment.} }
\label{tab:cnn-par}
\vskip 0.15in
\begin{center}
\begin{scriptsize}
\begin{tabular}{M{2.1cm}|M{1.2cm}|M{2.0cm}|M{2.5cm}|M{2.5cm}}
\hline
Algorithm & $\eta$ & $\alpha$ & $\beta$ & $\gamma$\\
\hline
DSGPA-F-PB &0.5 & 0.5 & 0.1 & 0.5\\
\hline
DSGPA-T & $0.5/{k^{10^{-5}}}$ & $0.5k^{10^{-5}}$ & $0.1k^{10^{-5}}$& \ding{55}\\
\hline
DSGPA-F &0.5 & 0.5 & 0.1& \ding{55}\\
\hline
DM-SGD \cite{Yu2019on} & 0.1 & \ding{55} &0.8& \ding{55}\\
\hline
D-SGD \cite{jiang2017collaborative,lian2017can}& 0.1 &\ding{55} &\ding{55}& \ding{55}\\
\hline
C-SGD & 0.1 &\ding{55} &\ding{55}& \ding{55}\\
\hline
\end{tabular}
\end{scriptsize}
\end{center}
\vskip -0.1in
\end{table*}
\end{appendices}
\end{document}